\newcommand{\cor}[1]{\textit{#1}} % corsivo
\newcommand{\gr}[1]{\textbf{#1}} % grassetto
\newcommand{\m}[1]{\mathrm{#1}} % non corsivo in equazioni
\newcommand\R{{\mathbb R}} % reali
\newcommand\Rp{{\mathbb R}^+} % reali positivi
\newcommand\implica{\quad\Rightarrow\quad} % implicazione
\newcommand\f{\frac} %frazione
\newcommand\tf{\tfrac} % frazione piccola
\newcommand\der[2]{\f{d#1}{d#2}} % derivata in notazione di Leibniz
\newcommand\iii{\infty} % infinito
\newcommand\intd[3]{\int_{#1}#2d#3} % integrale
\newcommand\intl[4]{\int_{#1}^{#2}#3d#4} % integrale
\newcommand\mo[1]{|#1|} % modulo
\newcommand\mis{\mathop{\rm meas}} % misura
\newcommand\ff[1]{\varphi_{#1}} % phi giusta
\newcommand\pp{\psi} % psi
\newcommand\gn{\mathcal{C}_{\iii}} % costante di gagliardo nirenberg linfinto sulla semiretta
\newcommand\ru{u^*} % riarrangiata decrescente
\newcommand\sech{\mathrm{sech}} % secante iperbolica
\newcommand\G{\mathcal G} % grafo
\newcommand\K{\mathcal K} % grafo compatto
\newcommand\V{\m{V}} % insieme vertici
\newcommand\Vk{\m{V}_\K} % insieme vertici compatto
\newcommand\E{\m{E}} % insieme archi
\newcommand\Ek{\m{E}_\K} % insieme archi compatto
\newcommand\I{I} % intervallo I
\newcommand\lung{L} % misura grafo compatto
\newcommand\nn{N} % numero rette
\newcommand\pa{\nu} % numero partizioni
\newcommand\vg{\textsc{v}} % generico vertice del grafo
\newcommand\vv[1]{\textsc{v}_{#1}} % arrivo retta i-esima
\newcommand\w{\textsc{w}} % partenza retta i-esima
\newcommand\Rv[1]{\mathcal{R}_{#1}} % retta i-esima
\newcommand\thc{\widetilde{\lung}} % threshold esempio
\newcommand\Ce{\mathcal{C}_p} % costante per gagliardo nirenberg
\newcommand\Cp{c_p} % costante per esistenza
\newcommand\thes{\lung_{1}} % threshold esistenza
\newcommand\thnon{\lung_{2}} % threshold non-esistenza
\newcommand\ldueg{{L^2(\G)}} % l2 su grafo
\newcommand\nldueg[1]{\left\|#1\right\|_\ldueg} % norma l2 su grafo
\newcommand\lduegc{{L^2(\K)}} % l2 su grafo compatto
\newcommand\nlduegc[1]{\left\|#1\right\|_\lduegc} % norma l2 su grafo compatto
\newcommand\lpg{{L^p(\G)}} % lp sul grafo
\newcommand\nlpg[1]{\left\|#1\right\|_\lpg} % norma lp sul grafo
\newcommand\lpgc{{L^p(\K)}} % lp sul grafo compatto
\newcommand\nlpgc[1]{\left\|#1\right\|_\lpgc} % norma lp sul grafo compatto
\newcommand\ldues{L^2(\Rv{i})} % l2 su semirette
\newcommand\nldues[1]{\left\|#1\right\|_{\ldues}} % norma l2 su semirette
\newcommand\lig{{L^{\iii}(\G)}} % linfinito sul grafo
\newcommand\nlig[1]{\left\|#1\right\|_\lig} % norma linfinito sul grafo
\newcommand\ldue{{L^2(\R)}} % l2 su R
\newcommand\nldue[1]{\left\|#1\right\|_\ldue} % norma l2 su R
\newcommand\li{{L^{\iii}(\R)}} % linfinito su R
\newcommand\nli[1]{\left\|#1\right\|_\li} % norma linfinito su R
\newcommand\lldue{{L^2(\R^+)}} % l2 sui positivi
\newcommand\nlldue[1]{\left\|#1\right\|_\lldue} % norma l2 sui positivi
\newcommand\llp{{L^p(\R^+)}} % lp sui positivi
\newcommand\nllp[1]{\left\|#1\right\|_\llp} % norma lp sui positivi
\newcommand\hg{{H^1(\G)}} % h1 su grafo
\newcommand\hk{{H^1(\K)}} % h1 su grafo compatto
\newcommand\hrv[1]{{H^1(\Rv{#1})}} % h1 su retta i
\newcommand\hm{{H_{\mu}^1(\G)}} % h1 con vincolo su grafo
\newcommand\nhg[1]{\left\|#1\right\|_\hg} % norma h1 su grafo
\newcommand\hr{{H^1(\R)}} % h1 sulla retta
\newcommand\hrpos{{H^1(\Rp)}} % h1 sulla semi-retta positiva
\newcommand\hrm{{H_{m}^1(\R)}} % h1 sulla retta con vincolo
\newcommand\hdir{{H_{m,a}^1(\R)}} % h1 sulla retta con doppio vincolo
\newcommand\hhdir{{H_{m,a}^1(\R^+)}} % h1 sui positivi con doppio vincolo
\newtheorem{theorem}{Theorem}[section]
\newtheorem{proposition}[theorem]{Proposition}
\newtheorem{corollary}[theorem]{Corollary}
\theoremstyle{remark}
\newtheorem{remark}[theorem]{Remark}
\newtheorem*{remark*}{Remark}
\theoremstyle{definition}
\newtheorem{definition}[theorem]{Definition}
\newtheorem{example}[theorem]{Example}
\tikzstyle{nodino}=[circle,draw,fill,inner sep=0pt,minimum size=0.5mm]
\tikzstyle{infinito}=[circle,inner sep=0pt,minimum size=0mm]
\tikzstyle{nodo}=[circle,draw,fill,inner sep=0pt,minimum size=0.5*\widthof{k}]
\date{}
\title{NLS ground states on metric graphs\\with localized nonlinearities}
\author{Lorenzo Tentarelli
\\ \ \\
{\small  Dipartimento di Scienze Matematiche ``G.L. Lagrange'', Politecnico di Torino } \\
{\small Corso Duca degli Abruzzi, 24, 10129 Torino, Italy} \\
{\small \texttt{lorenzo.tentarelli@polito.it}}}
\begin{document}

\maketitle

\begin{abstract}
We investigate the existence of ground states for the focusing subcritical NLS energy on metric graphs with localized nonlinearities. In particular, we find two thresholds on the measure of the region where the nonlinearity is localized that imply, respectively, existence or nonexistence of ground states. In order to obtain these results we adapt to the context of metric graphs some classical techniques from the Calculus of Variations.
\end{abstract}

\noindent{\small AMS Subject Classification: 35R02, 35Q55, 81Q35, 49J40, 58E30.}
\smallskip

\noindent{\small Keywords: Minimization, metric graphs, NLS energy, nonlinear Schr$\ddot{\m{o}}$dinger\\Equation, localized nonlinearity.}

\section{Introduction}\label{sec:intro}

In this paper we discuss the existence of a ground state for the NLS energy functional with a \cor{localized nonlinearity}
\begin{equation}
 \label{eq:levelfunc}
 E(u,\G):=\f{1}{2}\nldueg{u'}^2-\f{1}{p}\nlpgc{u}^p,
\end{equation}
where $p\in(2,6)$ is a real parameter and $\G$ and $\K$ are two connected \cor{metric graphs}, such that $\K$ is a \cor{compact} subgraph of $\G$. Namely, we investigate the minimization problem
\begin{equation}
 \label{eq:minimipbm}
 \min E(u,\G),\quad u\in\hg,\quad\nldueg{u}^2=\mu
\end{equation}
where $\mu>0$ is a given number.

\medskip
Here we present a rather informal description of the problem and of the main results of the paper, whereas a precise setting and formal definitions are given in Sections \ref{sec:setting} and \ref{sec:results}.

It is well known that, when the graph $\G$ is compact, the minimization problem is trivial. The aim of this paper, therefore, is the study of existence and nonexistence of solutions to (\ref{eq:minimipbm}) when $\G$ is made up of a compact graph $\K$ where the nonlinearity is localized and a finite number of \cor{half-lines} $\Rv{i}$ incident at some vertices of $\K$. For the sake of completeness, we recall that the case of \cor{non-localized} nonlinearities (where the \cor{lack of compactness} is stronger) is investigated in \cite{AST-1}.

The main results we are going to present are the following. In Theorem \ref{theorem:infsotto} we adapt to the context of metric graphs a classical \cor{level argument} (see e.g. \cite{BN} and \cite{Prinari, Bellazzini}) in order to overcome the lack of compactness. More in detail, first we show that under general assumptions
\[
 \inf_{\substack{u\in\hg\\[.1cm]\nldueg{u}^2=\mu}}E(u,\G)\leq0
\]
and further that, if
\[
 \inf_{\substack{u\in\hg\\[.1cm]\nldueg{u}^2=\mu}}E(u,\G)<0,\quad\mbox{then the infimum is attained}.
\]
Consequently, we exploit these two properties to prove that existence and nonexistence of ground states strongly depends on the exponent $p$ of the nonlinearity and on the metric properties of the compact graph $\K$. In particular, Theorem \ref{theorem:pdue} states that, if $p\in(2,4)$, then there always exists a minimizer for $E$, whereas Theorem \ref{theorem:pquattro} states that, when $p\in[4,6)$, the existence of minimizers depends on the measure of $\K$; precisely, there is a threshold for $\mis(\K)$ over which the minimum is always attained and another one under which the minimum cannot be attained. In Corollary \ref{corollary:nonesistenza} we slightly improve the nonexistence threshold, by adding some further assumptions on the inner structure of $\G$.

Finally we highlight that all the functions we consider are real-valued. Indeed, as pointed out in \cite{AST-1}, this is not restrictive since $E(|u|,\G)\leq E(u,\G)$ and any ground state is in fact real-valued (and then, without loss of generality, nonnegative), up to multiplication by a constant phase $e^{i\vartheta}$.

\medskip
Among the physical motivations for the investigation of this kind of problem, nowadays the most topical ones come from the study of \cor{quantum graphs}. This subject has gained popularity in recent years (as extensively pointed out in \cite{Kottos, Gnutzmann1}) not only because graphs emulate succesfully complex mesoscopic and optical networks, but also because they manage to reproduce universal properties of quantum chaotic systems. In particular, in \cite{Gnutzmann2} (and the references therein) two main applications are exhibited to motivate the study of NLS energy functionals with localized nonlinearities. Indeed, they can be of interest both in the analysis of the effects of nonlinearity on transmission through a complex network of nonlinear one-dimensional leads (for istance, \cor{optical fibers}) and in the analysis of the properties of \cor{Bose-Einstein condensates} in non-regular traps (see also \cite{AST-1,Dalfovo}). In other words, even though these are idealized models, they actually describe 
systems made up by several long leads (where the nonlinearity is, in principle, negligible) linked to a ``dense'' sub-network (where the nonlinearity is strong) and reproduce accurately the effects that the latter induces on the whole network.

It is also worth recalling that the investigation of the NLS equation with localized nonlinearities in standard domains has been the object of several papers. In the one-dimensional case we first mention \cite{Malomed}, that introduces a pioneering model of nonlinearities concentrated at a finite number of points. Then, \cite{Adami3} contains a rigorous definition of the problem with potentials modeled by \cor{Dirac delta functions} with amplitude depending nonlinearly on the wave function.  Furthemore, \cite{Cacciapuoti} shows that such nonlinearities can be seen as \cor{point-like} limits of spatially concentrated nonlinearities, whereas \cite{Buslaev} deals with the associated issue of the asymptotic stability. Finally, for a rigorous setting of the case of a \cor{point-interaction} with strength depending nonlinearly on the wave function in dimension three, we refer the reader to \cite{Adami1}, while in \cite{Jonalasinio,Nier} one can find several models of concentrated nonlinearities in dimension three 
or higher.

In this framework, our work can be seen as a slight extension of the one-dimensional case (although we only focus on ground states of the NLS energy) in that a nonlinear potential is spread over a compact graph in place of a Dirac delta based at single point.

\medskip
The paper is organized as follows. In Sections \ref{sec:setting} and \ref{sec:results} we present, respectively, the precise setting of the problem and the statemets of the main results. Section \ref{sec:aux} contains a discussion of some preliminary and auxiliary facts and techniques that may have interest in themselves. Finally, in Section \ref{sec:proofs} we prove the results stated in Section \ref{sec:results}.

\section{Setting and notation}\label{sec:setting}

Although it is a central notion in this paper, we do not present a deep and detailed discussion on \cor{metric graphs}. For a modern account on the subject we refer the reader to \cite{AST-1,Friedlander,Kuchment} and references therein.

We start with a brief recall on metric graphs and, in particular, on the class of metric graphs that we focus on in this paper. First of all, the graphs that we consider are \cor{multigraphs}, namely, graphs with possibly multiple edges and self-loops. Moreover we only deal with \cor{connected metric graphs}, that is, connected graphs with the ``representation'' introduced in \cite{AST-1}, which associates each edge $e$ of length $l_e$ with an interval $\I_e:=[0,l_e]$ parametrized by a variable $x_e$ ($x_e=0$ representing either the starting or the endpoint of $e$, depending on the orientation choice). In addition, if $l_e=+\iii$, then $e$ is called \cor{half-line} and $\I_e:=[0,+\iii)$ (now $x_e=0$ representing the starting point of the half-line).

Throughout this paper we denote by $\K=(\Vk,\Ek)$ a (non trivial) \cor{compact} connected metric graph, i.e. a connected metric graph without half-lines (or, equivalently, without \cor{vertices at infinity}). We stress the fact that there are no constraints on the degree of the vertices. This means that there are possibly vertices of degree one or two (see, for instance, Figure \ref{figuno}).

Furthermore, we consider $\nn$ distinct half-lines $\Rv{1},\dots,\Rv{\nn}$ starting from vertices (possibly not distinct) in $\K$ and ending, respectively, into $\nn$ (distinct) vertices at infinity $\vv{1},\dots,\vv{\nn}$. Then we define the graph
\begin{equation}
 \label{eq:graphdef}
 \begin{array}{c}
  \G:=(\V,\E),\\[.3cm]
  \V:=\Vk\cup\{\vv{1},\dots\vv{\nn}\},\quad\E:=\Ek\cup\{\Rv{1},\dots,\Rv{\nn}\}
 \end{array}
\end{equation}
(a typical case is depicted in Figure \ref{figdue}).

\begin{figure}[ht]
\begin{center}
\begin{tikzpicture}[xscale= 0.7,yscale=0.7]
\node at (0,0) [nodo] (1) {};
\node at (2,0) [nodo] (2) {};
\node at (4,1) [nodo] (3) {};
\node at (2,3) [nodo] (4) {};
\node at (1,2) [nodo] (5) {};
\node at (-1,2) [nodo] (6) {};
\node at (-2,0) [nodo] (7) {};
\node at (-1,3) [nodo] (8) {};
\node at (-2,2) [nodo] (9) {};
\node at (-4,0) [nodo] (10) {};
\node at (-2,-2) [nodo] (11) {};
\node at (0,-2) [nodo] (12) {};
\node at (3,-2) [nodo] (13) {};
\node at (-3,1) [nodo] (14) {};

\draw (4.5,1) circle (0.5cm);

\draw [-] (1) -- (2);
\draw [-] (2) -- (3);
\draw [-] (3) -- (4);
\draw [-] (4) -- (5);
\draw [-] (5) -- (2);
\draw [-] (1) -- (13);
\draw [-] (1) -- (12);
\draw [-] (5) -- (6);
\draw [-] (1) -- (6);
\draw [-] (8) -- (6);
\draw [-] (11) -- (12);
\draw [-] (7) -- (12);
\draw [-] (1) -- (7);
\draw [-] (6) -- (7);
\draw [-] (9) -- (7);
\draw [-] (7) -- (11);
\draw [-] (11) -- (10);
\draw [-] (7) -- (10);
\draw [-] (9) -- (10);
\draw [-] (9) -- (6);
\draw [-] (1) to [out=100,in=190] (5);
\draw [-] (1) to [out=10,in=270] (5);
\end{tikzpicture}
\end{center}
\caption{\footnotesize{{a \cor{compact metric graph} $\K$ with 14 vertices and 24 bounded edges. In particular there are: 1 \cor{self-loop}, 1 \cor{multiple} edge, 2 vertices of \cor{degree one} and 2 vertices of \cor{degree two}.}
 }}
\label{figuno}
\end{figure}
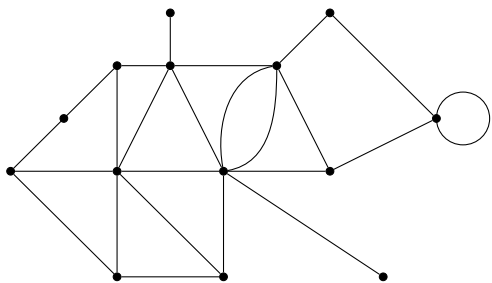

\begin{figure}[ht]
\begin{center}
\begin{tikzpicture}[xscale= 0.7,yscale=0.7]
\node at (0,0) [nodo] (1) {};
\node at (2,0) [nodo] (2) {};
\node at (4,1) [nodo] (3) {};
\node at (2,3) [nodo] (4) {};
\node at (1,2) [nodo] (5) {};
\node at (-1,2) [nodo] (6) {};
\node at (-2,0) [nodo] (7) {};
\node at (-1,3) [nodo] (8) {};
\node at (-2,2) [nodo] (9) {};
\node at (-4,0) [nodo] (10) {};
\node at (-2,-2) [nodo] (11) {};
\node at (0,-2) [nodo] (12) {};
\node at (3,-2) [nodo] (13) {};
\node at (-3,1) [nodo] (14) {};
\node at (8,-2) [nodo] (15) {};
\node at (8,3) [nodo] (16) {};
\node at (-9,-2) [nodo] (17) {};
\node at (-8.5,4) [nodo] (18) {};
\node at (-9,1) [nodo] (19) {};
\node at (8,-1.6) {$\vv{1}$};
\node at (8,3.4) {$\vv{3}$};
\node at (-8.4,3.5) {$\vv{2}$};
\node at (-9,-1.6) {$\vv{4}$};
\node at (-9,0.6) {$\vv{5}$};
\node at (5.5,-1.6) {$\Rv{1}$};
\node at (5,3.4) {$\Rv{3}$};
\node at (-5.6,2.9) {$\Rv{2}$};
\node at (-5.5,-1.6) {$\Rv{4}$};
\node at (-6,0.2) {$\Rv{5}$};
\node at (-2,3) {$\K$};

\draw (4.5,1) circle (0.5cm);

\draw [-] (1) -- (2);
\draw [-] (2) -- (3);
\draw [-] (3) -- (4);
\draw [-] (4) -- (5);
\draw [-] (5) -- (2);
\draw [-] (1) -- (13);
\draw [-] (1) -- (12);
\draw [-] (5) -- (6);
\draw [-] (1) -- (6);
\draw [-] (8) -- (6);
\draw [-] (11) -- (12);
\draw [-] (7) -- (12);
\draw [-] (1) -- (7);
\draw [-] (6) -- (7);
\draw [-] (9) -- (7);
\draw [-] (7) -- (11);
\draw [-] (11) -- (10);
\draw [-] (7) -- (10);
\draw [-] (9) -- (10);
\draw [-] (9) -- (6);
\draw [-] (1) to [out=100,in=190] (5);
\draw [-] (1) to [out=10,in=270] (5);
\draw [-] (13) -- (5.5,-2);
\draw [dashed] (15) -- (5.5,-2);
\draw [-] (4) -- (5,3);
\draw [dashed] (16) -- (5,3);
\draw [-] (14) -- (-5.75,2.5);
\draw [dashed] (18) -- (-5.75,2.5);
\draw [-] (11) -- (-5.5,-2);
\draw [dashed] (17) -- (-5.5,-2);
\draw [-] (11) -- (-5.5,-0.5);
\draw [dashed] (19) -- (-5.5,-0.5);
\end{tikzpicture}
\end{center}
\caption{\footnotesize{{a noncompact metric graph $\G$ obtained attaching 5 \cor{half-lines} (and 5 \cor{vertices at infinity}) to the graph $\K$ of Figure \ref{figuno}. Note that $\Rv{1}$ is attached to a vertex of degree one, $\Rv{4}\text{ and }\Rv{5}$ to a vertex of degree three and $\Rv{2}\text{ and }\Rv{3}$ to distinct vertices of degree two.}
 }}
\label{figdue}
\end{figure}
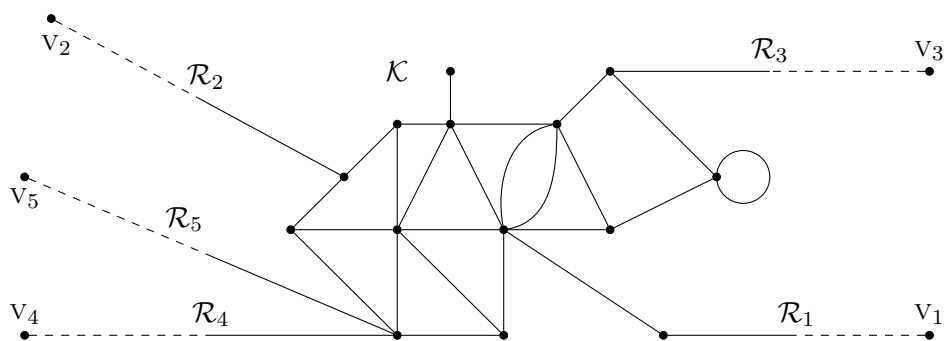

Clearly the measure of the compact graph $\K$, defined by
\begin{equation}
 \label{eq:lunghezzagrafo}
 \mis(\K):=\sum_{e\in\Ek}\mis(\I_e),
\end{equation}
is finite, while that of $\G$ is equal to $\iii$ (and so $\G$ is not compact). Consequently, a function $u:\G\to\R$ can be regarded as a family of functions $(u_e)_{e\in \E}$, where $u_e:\I_e\to\R$ is the restriction of $u$ to the edge $\I_e$, and $L^p$ spaces can be defined over $\G$ in the natural way, with norm
\[
\nlpg{u}^p=\sum_{e\in \E} \| u_e\|_{L^p(\I_e)}^p.
\]
The Sobolev space $\hg$ can be defined as the set of those functions $u:\G\to\R$ such that $u=(u_e)_{e\in \E}$ is continuous on $\G$ and $u_e\in H^1(\I_e)$
for every edge $e\in E$, with the natural norm
\[
\nhg{u}^2:=\nldueg{u'}^2+\nldueg{u}^2.
\]

Fix, now, the graphs $\K$ and $\G$ as above and let $\mu$ and $p$ be such that
\begin{equation}
 \label{eq:paramassum}
 \mu>0\quad\mbox{and}\quad2<p<6.
\end{equation}
Then the functional in (\ref{eq:levelfunc}) is well defined over $\hg$ and takes the form
\[
 E(u,\G)=\f{1}{2}\sum_{e\in\E}\intd{\I_e}{\mo{u_e'(x_e)}^2}{x_e}-\f{1}{p}\sum_{e\in\Ek}\intd{\I_e}{\mo{u_e(x_e)}^p}{x_e}.
\]
Moreover, defining
\begin{equation}
 \label{eq:sobolevmass}
 \hm:=\{u\in\hg:\nldueg{u}^2=\mu\},
\end{equation}
the minimization problem (\ref{eq:minimipbm}) can be written in a compact form as
\begin{equation}
 \label{eq:constrainedminimi}
 \min_{u\in\hm}E(u,\G).
\end{equation}

\begin{remark}
 \label{remark:scaling}
 Note that if $u\in\hm$ and $\lambda$ is positive, then the function
 \[
  w(x):=\lambda^{\frac{2}{6-p}}u\left(\lambda^{\frac{p-2}{6-p}}x\right)
 \]
 belongs to $H_{\lambda\mu}^1(\G')$, where $\G'$ is a graph obtained from $\G$ by means of a \cor{homothety} of factor $\lambda^{\frac{2-p}{6-p}}$. Moreover a change of variable shows that
 \[
  E(w,\G')=\lambda^{\frac{2+p}{6-p}}E(u,\G),
 \]
 and hence the problem with mass constraint $\lambda\mu$ over the graph $\G'$ is equivalent to problem (\ref{eq:constrainedminimi}).
\end{remark}

\section{Main results}\label{sec:results}

In this section we present the main results on \cor{existence} and \cor{nonexistence} of a solution for problem (\ref{eq:constrainedminimi}).

We start with a general result.

\begin{theorem}
 \label{theorem:infsotto}
 Let $\G$ be a graph as in (\ref{eq:graphdef}) and let $\mu,p$ satisfy (\ref{eq:paramassum}). Then
 \begin{equation}
  \label{eq:infzero}
  \inf_{u\in\hm}E(u,\G)\leq0.
 \end{equation}
 Moreover, if
 \begin{equation}
  \label{eq:infsotto}
  \inf_{u\in\hm}E(u,\G)<0,
 \end{equation}
 then the infimum is attained and, up to a change of sign, every minimizer is strictly positive.
\end{theorem}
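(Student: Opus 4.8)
The plan is to establish the two assertions in sequence, relying on two facts that belong to the preliminary material of Section~\ref{sec:aux}: the compactness of the embedding $\hk\hookrightarrow\lpgc$ (which reduces, edge by edge, to the compactness of $H^1(I)\hookrightarrow L^p(I)$ on a bounded interval, since $\K$ has finitely many edges), and a one--dimensional Gagliardo--Nirenberg inequality on $\K$ of the type $\nlpgc{u}^p\le C\,\nlduegc{u'}^{p/2-1}\nlduegc{u}^{p/2+1}+C\,\nlduegc{u}^p$. For \eqref{eq:infzero} I would simply spread a fixed amount of mass along a half-line: fixing $\phi\in\hrpos$ with $\phi(0)=0$ and $\nlldue{\phi}=1$ and letting $u_n\in\hm$ coincide with $x\mapsto\sqrt{\mu/n}\,\phi(x/n)$ on $\Rv{1}$ and vanish on the rest of $\G$ (the condition $\phi(0)=0$ makes $u_n$ continuous at the vertex carrying $\Rv{1}$), a change of variables gives $\nldueg{u_n}^2=\mu$, and since the nonlinearity is switched off on the support of $u_n$ one has $E(u_n,\G)=\tf12\nlldue{u_n'}^2=\tf{\mu}{2n^2}\nlldue{\phi'}^2\to0$, which yields \eqref{eq:infzero}.

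Assume now $\mathcal{E}:=\inf_{u\in\hm}E(u,\G)<0$ and pick a minimizing sequence $(u_n)\subset\hm$. The Gagliardo--Nirenberg inequality, together with $\nldueg{u_n}^2=\mu$, gives $E(u_n,\G)\ge\tf12\nlduegc{u_n'}^2-c_1\nlduegc{u_n'}^{p/2-1}-c_2$; since $p<6$ forces $p/2-1<2$, the right--hand side is coercive in $\nlduegc{u_n'}$, so $\nlduegc{u_n'}$ is bounded, whence $\nldueg{u_n'}^2=2E(u_n,\G)+\tf2p\nlpgc{u_n}^p$ is bounded as well, and so $(u_n)$ is bounded in $\hg$. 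Up to a subsequence $u_n\rightharpoonup u$ weakly in $\hg$; restricting to $\K$, the compact embedding $\hk\hookrightarrow\lpgc$ gives $u_n\to u$ strongly in $\lpgc$, so $\nlpgc{u_n}^p\to\nlpgc{u}^p$, while weak lower semicontinuity of $v\mapsto\nldueg{v'}^2$ yields $E(u,\G)\le\liminf_n E(u_n,\G)=\mathcal{E}$; also $\nldueg{u}^2\le\mu$, say $\nldueg{u}^2=\alpha\mu$ with $\alpha\in[0,1]$.

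The crucial step is to show $\alpha=1$. If $\alpha=0$, then $\nlpgc{u_n}^p\to0$ and $E(u_n,\G)\ge-\tf1p\nlpgc{u_n}^p\to0$, contradicting $\mathcal{E}<0$; in particular $\nlpgc{u}>0$, for otherwise $E(u,\G)\ge0$. If $0<\alpha<1$, set $\beta:=\alpha^{-1/2}>1$ and $\widetilde u:=\beta u\in\hm$; then
\[
E(\widetilde u,\G)=\beta^2E(u,\G)+\f{\beta^2-\beta^p}{p}\,\nlpgc{u}^p<\beta^2E(u,\G)\le\beta^2\mathcal{E}<\mathcal{E},
\]
using $\beta^p>\beta^2$ and $\nlpgc{u}>0$ for the first strict inequality and $\beta^2>1$, $\mathcal{E}<0$ for the last, which contradicts the definition of $\mathcal{E}$. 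Hence $\alpha=1$, so $u\in\hm$ and $E(u,\G)\le\mathcal{E}$ forces $E(u,\G)=\mathcal{E}$: $u$ is a minimizer. This passage — ruling out a partial loss of mass in the weak limit by rescaling a nontrivial limit back to the prescribed mass — is the classical ``level argument'', it is exactly where $\mathcal{E}<0$ and the superquadratic scaling of the nonlinear term ($p>2$) are used, and it is the only genuinely delicate point of the proof.

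It remains to discuss the sign. Since $\nldueg{|u|'}\le\nldueg{u'}$ and $\nlpgc{|u|}=\nlpgc{u}$, the function $|u|\in\hm$ is again a minimizer and is nonnegative; by the Lagrange multiplier theorem it solves, for some $\lambda\in\R$, $-u''=\lambda u$ on each half-line and $-u''=\lambda u+u^{p-1}$ on each edge of $\K$, with continuity and Kirchhoff conditions at the vertices, and by elliptic regularity it is $C^1$ on $\G$ and $C^2$ on each edge. If $|u|$ vanished at some point, that point would be a minimum of the nonnegative $C^1$ function $|u|$, hence also a zero of the relevant edge derivative; reading the edge equation as a second--order linear ODE with continuous coefficient and invoking uniqueness for the initial value problem gives $|u|\equiv0$ on that edge, and then, propagating across the vertices — where the Kirchhoff condition together with nonnegativity forces every incident one--sided derivative to vanish — and using the connectedness of $\G$, one obtains $|u|\equiv0$ on $\G$, against $\nldueg{u}^2=\mu$. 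Therefore $|u|>0$ everywhere; being continuous, nowhere vanishing and defined on a connected graph, $u$ has constant sign, i.e.\ $u=|u|>0$ or $u=-|u|<0$.
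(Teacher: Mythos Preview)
Your proof is correct and follows essentially the same route as the paper: the same mass-spreading test function on a half-line for \eqref{eq:infzero}, then Gagliardo--Nirenberg coercivity, the compact embedding $\hk\hookrightarrow\lpgc$, weak lower semicontinuity, and the $\sigma>1$ rescaling to exclude mass loss, with positivity deferred in the paper to Proposition~\ref{proposition:eulageq} while you spell out the ODE-uniqueness/Kirchhoff propagation argument explicitly. One minor discrepancy: the Gagliardo--Nirenberg inequality actually proved in Section~\ref{sec:aux} (Proposition~\ref{proposition:gn}) is on the full noncompact graph $\G$ with no lower-order term, not on $\K$ as you assume, and the paper uses it to get $E(u,\G)\ge\tf12\nldueg{u'}^2-c\,\nldueg{u'}^{p/2-1}$ in one step (see \eqref{eq:gnapplication}); your compact-graph version with the $+C\nlduegc{u}^p$ correction is valid and leads to the same boundedness, but it is not what appears in the paper.
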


It is easy to construct an example of graph $\G$ for which the infimum is achieved.

\begin{example}
 \label{example:similsoliton}
 Let $p,\mu$ satisfy (\ref{eq:paramassum}) and define the function
 \[
  \varphi_{\mu}(x):=\mu^{\alpha}\varphi_1(\mu^{\beta}x),\quad\alpha=\tf{2}{6-p},\quad\beta=\tf{p-2}{6-p},
 \]
 where
 \[
  \varphi_1(x):=\Gamma_p\sech(\gamma_px)^{\alpha/\beta},\quad\mbox{with}\quad\Gamma_p,\gamma_p>0.
 \]
 It is well known that $\varphi_{\mu}$ (usually called a \cor{solition of mass} $\mu$) is a minimizer for (\ref{eq:constrainedminimi}) when $\G=\R$ and the nonlinearity is \gr{not} localized and, in particular,
 \[
  \f{1}{2}\intd{\R}{\mo{\varphi_{\mu}'(x)}^2}{x}-\f{1}{p}\intd{\R}{\mo{\varphi_{\mu}(x)}^p}{x}=\mathcal{E}<0
 \]
 (for more details, see \cite{Adami2,AST-1}). Now let $\K$ be a segment of length $\lung$ and assume, in addition, that $\nn=2$ and that the two half-lines are incident at the endpoints of $\K$. Hence the graph $\G$ can be seen as a straight line and the energy functional reads
 \[
  E(u,\G):=\f{1}{2}\intd{\R}{\mo{u'(x)}^2}{x}-\f{1}{p}\int_{-\lung/2}^{\lung/2}\mo{u(x)}^pdx.
 \]
 Hence, if $\thc$ is a positive constant such that
 \[
  \intd{\mo{x}>\thc/2}{\mo{\varphi_{\mu}(x)}^p}{x}<\mo{\mathcal{E}},
 \]
 then we see that
 \[
  E(\varphi_{\mu},\G)<0\qquad\mbox{whenever}\qquad\lung>\thc
 \]
 and, from Theorem \ref{theorem:infsotto}, $E$ has a minimizer in $\hm$. \endproof
\end{example}

In fact, in more general cases, existence or nonexistence of minimizers for $E$ strongly depends on the exponent of the nonlinearity and on the measure of the compact graph $\K$. This is described in the following two theorems.

\begin{theorem}
 \label{theorem:pdue}
 Let $\G$ be as in (\ref{eq:graphdef}) and $\mu>0$. Then for every $p\in(2,4)$ the minimum in (\ref{eq:constrainedminimi}) is attained.
\end{theorem}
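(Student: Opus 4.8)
The plan is to derive the statement from Theorem~\ref{theorem:infsotto}: it suffices to exhibit, for every $p\in(2,4)$, one function in $\hm$ with strictly negative energy, since then (\ref{eq:infsotto}) holds and hence the infimum in (\ref{eq:constrainedminimi}) is attained. I would construct such a function by keeping it at a small constant height on all of the compact core $\K$ while spreading most of the mass thinly along the half-lines, and then let the spreading length tend to infinity; the restriction $p<4$ is exactly what makes the localized nonlinear term beat the kinetic term in that limit.

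Concretely, set $\ell:=\mis(\K)$, which is finite and strictly positive since $\K$ is compact and nontrivial, and let $\nn\ge1$ be the number of half-lines of $\G$. For $R>0$ let $u_R$ be the function equal to a constant $h_R>0$ on the whole of $\K$ and equal, on each half-line $\Rv{i}$ (parametrized by $[0,+\iii)$ with $0$ at the attaching vertex), to the continuous piecewise-affine profile $u_R(x)=h_R(1-x/R)$ for $x\in[0,R]$ and $u_R(x)=0$ for $x\ge R$. Then $u_R$ is continuous on $\G$ — at every vertex of $\K$ all incident edges and half-lines carry the common value $h_R$ — and $u_R,u_R'\in\ldueg$, so $u_R\in\hg$. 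Choosing
\[
 h_R^2=\frac{\mu}{\ell+\tf{\nn}{3}R}
\]
gives $\nldueg{u_R}^2=h_R^2\bigl(\ell+\tf{\nn}{3}R\bigr)=\mu$, i.e.\ $u_R\in\hm$, with $h_R^2\sim 3\mu/(\nn R)$ as $R\to+\iii$.

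A direct computation then yields, as $R\to+\iii$,
\[
 \nldueg{u_R'}^2=\frac{\nn\, h_R^2}{R}=O\!\left(R^{-2}\right),\qquad
 \nlpgc{u_R}^p=\ell\, h_R^p\sim \ell\,(3\mu/\nn)^{p/2}R^{-p/2},
\]
and therefore
\[
 R^{p/2}E(u_R,\G)=\tf12 R^{p/2}\nldueg{u_R'}^2-\tf1p R^{p/2}\nlpgc{u_R}^p
 \;\longrightarrow\; -\frac{\ell}{p}\,(3\mu/\nn)^{p/2}<0,
\]
since $p/2<2$ makes the kinetic contribution $R^{p/2}O(R^{-2})=O(R^{p/2-2})$ vanish in the limit. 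Hence $E(u_R,\G)<0$ for $R$ large enough, so $\inf_{u\in\hm}E(u,\G)<0$, and Theorem~\ref{theorem:infsotto} gives that the minimum in (\ref{eq:constrainedminimi}) is attained.

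I do not expect a genuine obstacle here: the test family is explicit and the estimates are elementary. The only points deserving care are the continuity of $u_R$ at vertices of $\K$ where several half-lines, or several edges of $\K$, meet — which is automatic since $u_R$ is constant on $\K$ — and the bookkeeping of exponents, the crux being that $p<4$ is equivalent to $p/2<2$, i.e.\ to the $R^{-p/2}$ decay of the nonlinear term dominating the $R^{-2}$ decay of the kinetic term; for $p\in[4,6)$ this comparison fails, which is precisely why that range requires the sharper analysis of Theorem~\ref{theorem:pquattro}.
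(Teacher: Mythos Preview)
Your proposal is correct and follows essentially the same strategy as the paper: both build a competitor that is a small constant $h$ on $\K$ with decaying tails on the half-lines carrying the remaining mass, and then observe that as $h\to 0$ the kinetic cost scales like $h^4$ while the localized nonlinear gain scales like $h^p$, so $p<4$ forces negative energy and Theorem~\ref{theorem:infsotto} applies. The only difference is cosmetic --- the paper uses exponential tails $ae^{-a^2x/(2m)}$ (the half-line Dirichlet minimizers of Corollary~\ref{corollary:dirichlethalf}) and sends $a\to0$, whereas you use piecewise-linear tents and send $R\to\infty$.
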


\begin{theorem}
 \label{theorem:pquattro}
 Let $\G$ be as in (\ref{eq:graphdef}) and $\mu>0$. If $p\in[4,6)$, then there exist two positive constants $\thes,\thnon$ such that
 \begin{equation}
  \label{eq:esistenza}
  \mis(\K)>\thes\implica\mbox{the minimum in (\ref{eq:constrainedminimi}) is attained},
 \end{equation}
 \begin{equation}
  \label{eq:nonesistenza}
  \mis(\K)<\thnon\implica\mbox{the minimum in (\ref{eq:constrainedminimi}) is not attained}.
 \end{equation}
\end{theorem}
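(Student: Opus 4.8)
The plan is to treat the two implications separately, each time reducing to the dichotomy of Theorem~\ref{theorem:infsotto}. For \eqref{eq:esistenza} it is enough to produce, when $\mis(\K)$ is large, one competitor $u\in\hm$ with $E(u,\G)<0$, since then Theorem~\ref{theorem:infsotto} gives a minimizer (strictly positive up to sign). For \eqref{eq:nonesistenza} I will instead show that, when $\mis(\K)$ is small, $E$ is \emph{strictly positive} on all of $\hm$; as $\inf_{\hm}E\le0$ by Theorem~\ref{theorem:infsotto}, this forces $\inf_{\hm}E=0$ and rules out its attainment.

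For the existence threshold I would use a \emph{plateau} competitor: let $u\equiv c$ on all of $\K$ and let $u$ decrease affinely from $c$ to $0$ over a length $\ell$ on each of the $\nn$ half-lines, vanishing beyond; here $c$ is fixed by $\nldueg{u}^2=\mu$, i.e. $c^2=\mu/(\mis(\K)+\nn\ell/3)$. Since $u'\equiv0$ on $\K$,
\[
E(u,\G)=\f{\nn c^2}{2\ell}-\f1p\,c^p\,\mis(\K),
\]
so the kinetic cost sits only on the half-lines while the (localized) nonlinear gain is of order $\mis(\K)$ times the $p$-th power of the plateau height. Minimizing this expression in the single parameter $\ell>0$ is elementary, and the minimal value is negative as soon as $\mis(\K)$ exceeds an explicit threshold $\thes=\thes(\mu,p,\nn)$; here $p\ge4$ is used precisely to get a threshold (for $p<4$ the same family already yields a negative value for every $\mis(\K)>0$, cf. Theorem~\ref{theorem:pdue}). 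This proves \eqref{eq:esistenza}.

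For the nonexistence threshold the heart of the matter is an a priori bound. Since $\G$ carries at least one half-line, a Gagliardo--Nirenberg inequality holds on $\G$ with a constant that can be taken independent of the graph within our class, namely $\nlig{u}^2\le2\,\nldueg{u}\,\nldueg{u'}$ (integrate $(u^2)'$ along a path from a point where $|u|$ is maximal out to infinity). Coupling it with the two interpolation estimates
\[
\nlpgc{u}^p\le\nlig{u}^p\,\mis(\K)\qquad\text{and}\qquad\nlpgc{u}^p\le\nlig{u}^{p-2}\,\nlduegc{u}^2\le\nlig{u}^{p-2}\mu,
\]
one gets, for every $u\in\hm$, the lower bound $E(u,\G)\ge g\big(\nldueg{u'}\big)$, where
\[
g(t):=\f12 t^2-\f1p\min\{B_1 t^{p/2},\,B_2 t^{(p-2)/2}\},\qquad B_1=(2\sqrt\mu)^{p/2}\mis(\K),\quad B_2=(2\sqrt\mu)^{(p-2)/2}\mu .
\]
The coefficient $B_1$ is proportional to $\mis(\K)$, while $B_2$ does not depend on it. A short study of $g$ shows that $g(t)>0$ for all $t>0$ once $\mis(\K)$ is small enough: for moderate $t$ the term $t^{p/2}$ (note $p/2\ge2$) is dominated by $\f12 t^2$ as soon as $B_1\propto\mis(\K)$ is small; for large $t$ the term $t^{(p-2)/2}$ (note $(p-2)/2<2$, as $p<6$) is again dominated by $\f12 t^2$; and the crossover point of the two estimates, which is of order $\mis(\K)^{-1}$, runs off to $+\infty$, so the two favourable ranges of $t$ eventually cover all of $(0,\infty)$. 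Hence there is $\thnon=\thnon(\mu,p,\nn)>0$ with: $\mis(\K)<\thnon\Rightarrow g>0$ on $(0,\infty)$. Finally every $u\in\hm$ has $\nldueg{u'}>0$ ($\G$ being noncompact), so $E(u,\G)\ge g\big(\nldueg{u'}\big)>0$ for all such $u$; together with $\inf_{\hm}E\le0$ this yields $\inf_{\hm}E=0$, not attained, proving \eqref{eq:nonesistenza}.

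The routine part is the existence half (a one-variable minimization plus a citation of Theorem~\ref{theorem:infsotto}); the genuinely delicate step is closing the gap in the analysis of $g$ for $p\in(4,6)$. There a single interpolation inequality for $\nlpgc{u}^p$ does not suffice: the $L^\infty$-times-measure bound controls the moderate-$\nldueg{u'}$ regime once $\mis(\K)$ is small, whereas it is the mass constraint, entering through the second estimate, that tames the concentrated competitors with large $\nldueg{u'}$ — and it is exactly the subcriticality $p<6$ that keeps $\f12\nldueg{u'}^2$ dominant there. Checking quantitatively that these two favourable ranges of $\nldueg{u'}$ overlap, uniformly as $\mis(\K)\to0$, is where the constant $\thnon$ is actually produced.
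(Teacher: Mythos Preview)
Your argument is correct on both halves, but the nonexistence part takes a genuinely different route from the paper's.

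For \eqref{eq:esistenza} you and the paper do essentially the same thing: a plateau $u\equiv a$ on $\K$ glued to decaying tails on the $\nn$ half-lines, then a one-variable optimisation. The only difference is cosmetic: the paper uses exponential tails $\ff{i}(x)=ae^{-a^2x/(2m)}$ (the Dirichlet minimizers of Corollary~\ref{corollary:dirichlethalf}) instead of your affine ones, which gives the slightly sharper expression $E(u,\G)=\tf{\nn^2 a^4}{8(\mu-a^2\lung)}-\tf{a^p\lung}{p}$ (your denominator would be $6$) and hence a marginally better $\thes$, but the analysis and conclusion are identical.

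For \eqref{eq:nonesistenza} the paper argues by an \emph{iterative bootstrap} rather than by your direct two-bound interpolation. Assuming $E(u,\G)\le0$, it starts from $\nldueg{u'}^2\le\lung\nlig{u}^p$ and feeds \eqref{eq:gni} back in, proving by induction
\[
\nldueg{u'}^2\le\f{1}{\gn^4\mu}\,\nlig{u}^{4(p/4)^{n+1}}\big(\gn^4\mu\lung\big)^{\sum_{i=0}^n(p/4)^i}\qquad(n\ge0);
\]
combining with the a priori bound $\nlig{u}\le\gn\Ce^{1/(6-p)}\mu^{2/(6-p)}$ (itself from $E\le0$, \eqref{eq:gnapplication} and \eqref{eq:gni}) and letting $n\to\infty$ forces $\nldueg{u'}=0$ whenever the bracketed quantity is $<1$, yielding the explicit $\thnon=\Ce^{(4-p)/(6-p)}\mu^{(2-p)/(6-p)}\gn^{-p}$. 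Your approach replaces this limit procedure by a finite computation: the two estimates $\nlpgc{u}^p\le\nlig{u}^p\mis(\K)$ and $\nlpgc{u}^p\le\nlig{u}^{p-2}\mu$, plugged through \eqref{eq:gni}, already cover the small-$t$ and large-$t$ regimes of your $g$, and the overlap condition $t_2\le t^*\le t_1$ reduces (after a short computation) to a single inequality of the form $B_1\le (p/2)^{2/(6-p)}B_2^{(4-p)/(6-p)}$, giving $\thnon$ directly. Your route is more elementary and avoids the induction; the paper's iteration, by contrast, makes the role of the ratio $p/4$ (and hence the borderline $p=4$) more visible and packages the cases $p=4$ and $p\in(4,6)$ into one inequality.
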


In some cases we can improve the threshold that guarantees nonexistence. To this aim, we give the following definition.

\begin{definition}
 \label{definition:partizione}
 Let $\nn\geq2$ and $\G$ be a graph satisfying (\ref{eq:graphdef}). In addition, let $(\G_i)_{i=1}^{\pa}$, with $2\leq\pa\leq\nn$, be a family of non trivial subgraphs of $\G$ (each consisting of edges and vertices of $\G$). We say that $(\G_i)_{i=1}^{\pa}$ is a \cor{partition of} $\G$ if:
 \begin{enumerate}
  \item $\displaystyle\bigcup_{i=1}^{\pa}\G_i=\G$;
  \item $\mis(\G_i\cap\G_l)=0$ whenever $i\neq l$;
  \item for every $i\in\{1,\dots,\pa\}$, there exists $j\in\{1,\dots,\nn\}$ such that $\Rv{j}\subset\G_i$.
 \end{enumerate}
\end{definition}

\noindent In other words, a partition of a graph is the union of ``essentially'' pairwise disjoint subsgraphs, each containing at least a half-line. We also stress the fact that every graph admits, in principle, several distinct partitions. Two examples of partitions are given in Figure \ref{figtre} in the case of a \cor{double bridge} graph.

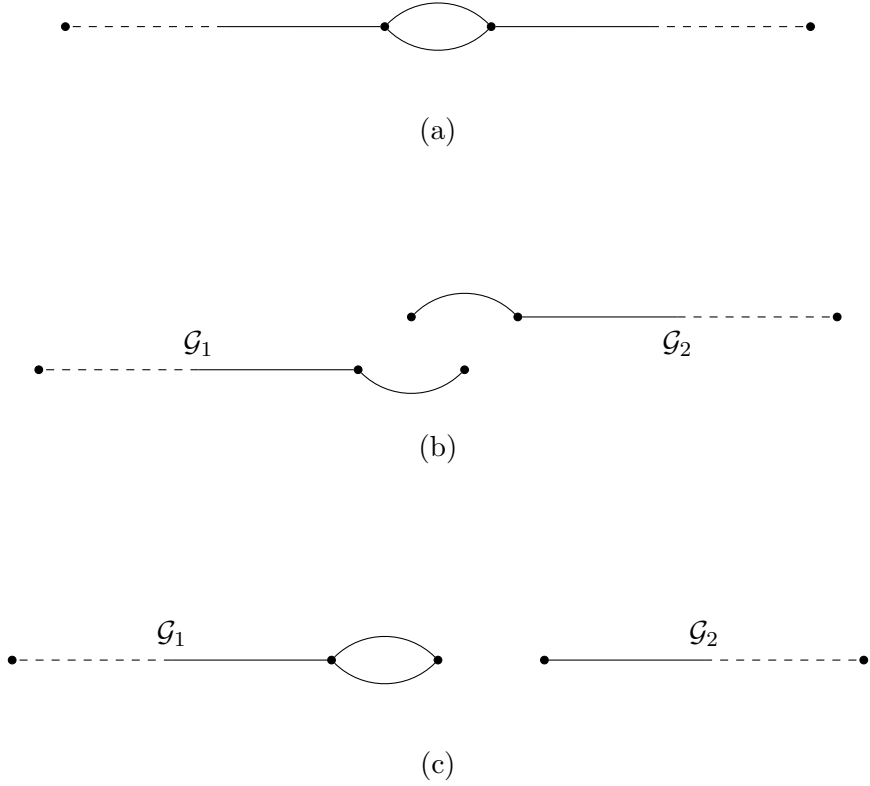
\begin{figure}[ht]
\begin{center}
\begin{tikzpicture}[xscale= 0.7,yscale=0.7]

\node at (-1,6) [nodo] (7) {};
\node at (1,6) [nodo] (8) {};
\node at (-7,6) [nodo] (9) {};
\node at (7,6) [nodo] (10) {};
\node at (0,4) {(a)};

\draw [-] (7) to [out=45,in=135] (8);
\draw [-] (7) to [out=-45,in=-135] (8);
\draw [-] (8) -- (4,6);
\draw [dashed] (4,6) -- (10);
\draw [-] (7) -- (-4,6);
\draw [dashed] (-4,6) -- (9);

%%%%%%%%%%%%%%%%%%%%%%%%%%%%%%%%%%

\node at (-0.5,0.5) [nodo] (1) {};
\node at (1.5,0.5) [nodo] (2) {};
\node at (7.5,0.5) [nodo] (3) {};
\node at (0.5,-0.5) [nodo] (4) {};
\node at (-1.5,-0.5) [nodo] (5) {};
\node at (-7.5,-0.5) [nodo] (6) {};
\node at (4.5,0) {$\G_2$};
\node at (-4.5,0) {$\G_1$};
\node at (0,-2) {(b)};

\draw [-] (1) to [out=45,in=135] (2);
\draw [-] (2) -- (4.5,0.5);
\draw [dashed] (4.5,0.5) -- (3);
\draw [-] (5) to [out=-45,in=-135] (4);
\draw [-] (5) -- (-4.5,-0.5);
\draw [dashed] (-4.5,-0.5) -- (6);

%%%%%%%%%%%%%%%%%%%%%%%%%%%%%%%%%%%

\node at (-2,-6) [nodo] (11) {};
\node at (0,-6) [nodo] (12) {};
\node at (-8,-6) [nodo] (13) {};
\node at (2,-6) [nodo] (14) {};
\node at (8,-6) [nodo] (15) {};
\node at (5,-5.5) {$\G_2$};
\node at (-5,-5.5) {$\G_1$};
\node at (0,-8) {(c)};

\draw [-] (11) to [out=45,in=135] (12);
\draw [-] (11) to [out=-45,in=-135] (12);
\draw [-] (11) -- (-5,-6);
\draw [dashed] (-5,-6) -- (13);
\draw [-] (14) -- (5,-6);
\draw [dashed] (5,-6) -- (15);

\end{tikzpicture}
\end{center}
\caption{\footnotesize{{(a) a \cor{double bridge} graph; (b) a partition where each $\G_i$ contains a half-line and a bounded edge; (c) a partition where $\G_1$ contains a half-line and all the bounded edges and $\G_2$ only a half-line.}
 }}
\label{figtre}
\end{figure}

Now we can extend (\ref{eq:nonesistenza}) as follows.

\begin{corollary}
 \label{corollary:nonesistenza}
 Let $\G$, $\mu$ and $p$ satisfy the assumptions of Theorem \ref{theorem:pquattro}. In addition, let $(\G_i)_{i=1}^{\pa}$ be a partition of $\G$ and $\K_i=\G_i\cap\K$, for $i=1,\dots,\pa$. If $\thnon$ is the same of Theorem \ref{theorem:pquattro} and 
 \[
  \mis(\K_i)<\thnon\qquad\forall i=1,\dots,\pa,
 \]
 then the minimum in (\ref{eq:constrainedminimi}) is not attained.
\end{corollary}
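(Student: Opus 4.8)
The plan is to argue by contradiction. Suppose $E$ attains its minimum on $\hm$ at some $u$, so that $E(u,\G)=\inf_{\hm}E\le0$ by Theorem \ref{theorem:infsotto}. I would then localize $u$ on the pieces of the partition and contradict the nonexistence part of Theorem \ref{theorem:pquattro} applied to those pieces. Set $u_i:=u|_{\G_i}$ and $\mu_i:=\|u_i\|_{L^2(\G_i)}^2$; since $u\in\hg$ is continuous on $\G$ and belongs to $H^1$ on each edge, the same is true of $u_i$, hence $u_i\in H^1(\G_i)$. By Definition \ref{definition:partizione}(1)--(2) two distinct pieces overlap only on a null set (i.e.\ on vertices), so the bounded edges of $\G$ — which all lie in $\K$ — and the half-lines $\Rv1,\dots,\Rv\nn$ are partitioned among the $\G_i$; additivity of the integrals then gives
\[
 \sum_{i=1}^{\pa}\mu_i=\mu,\qquad E(u,\G)=\sum_{i=1}^{\pa}E(u_i,\G_i),\qquad E(u_i,\G_i):=\tf12\|u_i'\|_{L^2(\G_i)}^2-\tf1p\|u_i\|_{L^p(\K_i)}^p .
\]
By Definition \ref{definition:partizione}(3) each $\G_i$ contains an entire half-line, so it is a noncompact graph of the type (\ref{eq:graphdef}) with compact core $\K_i=\G_i\cap\K$, and $\mis(\K_i)<\thnon$ by hypothesis.

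I would next apply Theorem \ref{theorem:pquattro} to each pair $(\G_i,\mu_i)$. For this one must check that the threshold $\thnon$ can be taken the same for all of these subproblems: a look at the proof of Theorem \ref{theorem:pquattro} should show that $\thnon$ depends only on $p$ and is non-increasing in the mass — which is also consistent with the scaling of Remark \ref{remark:scaling} — so that $\mis(\K_i)<\thnon(\mu)\le\thnon(\mu_i)$. Then Theorem \ref{theorem:pquattro} yields, for every $i$ with $\mu_i>0$, that the minimum of $E(\cdot,\G_i)$ over $\{v\in H^1(\G_i):\|v\|_{L^2(\G_i)}^2=\mu_i\}$ is not attained; coupling this with Theorem \ref{theorem:infsotto} on $\G_i$ (a strictly negative infimum would be attained) forces
\[
 \inf\bigl\{E(v,\G_i):v\in H^1(\G_i),\ \|v\|_{L^2(\G_i)}^2=\mu_i\bigr\}=0 .
\]
In particular $E(u_i,\G_i)\ge0$ for every $i$ — trivially so when $\mu_i=0$, since then $u_i\equiv0$.

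Summing over $i$ and using $E(u,\G)\le0$, one gets $0\ge E(u,\G)=\sum_iE(u_i,\G_i)\ge0$, hence $E(u_i,\G_i)=0$ for every $i$. As $\sum_i\mu_i=\mu>0$, there is an index $i_0$ with $\mu_{i_0}>0$; for that index $u_{i_0}\in H^1(\G_{i_0})$ has mass $\mu_{i_0}$ and satisfies $E(u_{i_0},\G_{i_0})=0=\inf\{E(v,\G_{i_0}):\|v\|_{L^2(\G_{i_0})}^2=\mu_{i_0}\}$, i.e.\ the minimum on $(\G_{i_0},\mu_{i_0})$ is attained — contradicting the preceding step. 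Therefore $E$ admits no minimizer on $\hm$.

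The step I expect to require most care is the transfer of the nonexistence mechanism of Theorem \ref{theorem:pquattro} to the subgraphs $\G_i$, i.e.\ precisely the uniformity of $\thnon$ used above. When the $\G_i$ (equivalently the $\K_i$) are connected this is just Theorem \ref{theorem:pquattro} on a smaller admissible graph together with the mass-monotonicity of $\thnon$; if some $\G_i$ is disconnected one should reason on its connected components — those containing a half-line as above, and the remaining (necessarily compact) ones, still with nonlinear region of measure $<\thnon$, requiring one to verify that the quantitative bound behind the definition of $\thnon$ keeps the energy nonnegative there too. The rest — additivity over the partition and the final contradiction — is routine.
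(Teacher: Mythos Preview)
Your argument is correct and shares the same backbone as the paper's proof --- decompose $E(u,\G)=\sum_i E(u|_{\G_i},\G_i)$ and show each summand is nonnegative --- but the key technical step is handled differently. You work at the restricted masses $\mu_i=\|u|_{\G_i}\|_{L^2(\G_i)}^2$, invoke Theorems~\ref{theorem:infsotto} and~\ref{theorem:pquattro} on $(\G_i,\mu_i)$ via the monotonicity of $\thnon$ in the mass, obtain only $E(u|_{\G_i},\G_i)\ge0$, and then need a final contradiction (some $u_{i_0}$ would realise the zero infimum). The paper instead rescales each nonzero restriction back to the \emph{original} mass $\mu$: choosing $\sigma\ge1$ with $\|\sigma u|_{\G_i}\|_{L^2(\G_i)}^2=\mu$ and using $E(\sigma v,\G_i)\le\sigma^2 E(v,\G_i)$ (valid since $\sigma\ge1$, $p>2$), the strict positivity $E>0$ established inside the proof of Theorem~\ref{theorem:pquattro} gives $E(u|_{\G_i},\G_i)>0$ directly. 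This yields the sharper conclusion $E(u,\G)>0$ for \emph{every} $u\in\hm$, without contradiction, and it bypasses your check on the $\mu$-dependence of $\thnon$ --- the scaling inequality is exactly that monotonicity in disguise. Your caveats about connectedness of the $\G_i$ and the possibility of a trivial $\K_i$ are well placed; the paper disposes of the latter in a one-line remark.
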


\begin{remark}
 \label{remark:improvement}
 In order to understand how Corollary \ref{corollary:nonesistenza} ``improves'' Theorem \ref{theorem:pquattro}, consider again the double bridge graph. If one chooses the partition of Figure \ref{figtre}(b), then
 \[
  \mis(\K_1)=\mis(\K_2)=\f{\mis(\K)}{2}.
 \]
 Hence, although $\mis(\K)>L_2$, one can claim that ground states do not exist (provided $\mis(\K_i)=\mis(\K_2)<L_2$).
\end{remark}

\begin{remark}
 \label{remark:cost}
 The bounds $\thes$ and $\thnon$ deserve some further comment. As we will show in Section \ref{sec:proofs}, we can compute them explicitly:
 \begin{equation}
  \label{eq:const}
  \begin{array}{ll}
   \mu\thes=\f{\nn^2}{2} & \mbox{if}\quad p=4 \\[.3cm]
   \mu^{\f{p-2}{6-p}}\thes=\Cp\nn^{\f{4}{6-p}} & \mbox{if}\quad p\in(4,6)\\[.3cm]
   \mu^{\f{p-2}{6-p}}\thnon=\Ce^{\tf{4-p}{6-p}}\gn^{-p} & \mbox{if}\quad p\in[4,6),
  \end{array}
 \end{equation}
 where $\Cp$ is defined by (\ref{eq:const_ex}),
 %$\Cp\rightarrow1/2$ if $p\downarrow4$ and $\Cp\rightarrow\infty$ if $p\uparrow6$
 while $\Ce$ and $\gn$ come, respectively, from inequalities (\ref{eq:gnp}) and (\ref{eq:gni}). In addition one can check that they are invariant under the transformations
 \[
  \mu\rightarrow\lambda\mu\qquad\G\rightarrow\lambda^{\f{2-p}{6-p}}\G,
 \]
 coherently with Remark \ref{remark:scaling}. This entails that the results we stated in Theorems \ref{theorem:pdue} and \ref{theorem:pquattro} can be equivalently formulated in terms of the mass $\mu$ in place of $\mis(\K)$. Precisely, for $\mis(\K)>0$ fixed, if $p\in(2,4)$, then there always exists a solution to (\ref{eq:constrainedminimi}); whereas, if $p\in[4,6)$, then there exist $\mu_1,\mu_2>0$ such that
 \[
  \mu>\mu_1\implica\mbox{the minimum in (\ref{eq:constrainedminimi}) is attained},
 \]
 \[
  \mu<\mu_2\implica\mbox{the minimum in (\ref{eq:constrainedminimi}) is not attained}.
 \]
\end{remark}

\section{Preliminary and auxiliary results}\label{sec:aux}

In this section we introduce some tools that we will use in the proofs of the main results in Section \ref{sec:proofs}.

The first one is a  version of the Gagliardo-Nirenberg inequalites over a graph $\G$.

\begin{proposition}
 \label{proposition:gn}
 Let $\G$ be as in (\ref{eq:graphdef}). For every $p\in[2,\iii]$ there exist two positive constants $\Ce$ (depending only on $p$) and $\gn$ such that
 \begin{equation}
  \label{eq:gnp}
  \nlpg{u}^p\leq \Ce\nldueg{u}^{\f{p}{2}+1}\nldueg{u'}^{\f{p}{2}-1}\quad\forall u\in\hg, \quad\mbox{if}\quad p<\iii,
 \end{equation}
 \begin{equation}
  \label{eq:gni}
  \nlig{u}\leq\gn\nldueg{u}^{\f{1}{2}}\nldueg{u'}^{\f{1}{2}}\quad\forall u\in\hg.
 \end{equation}
\end{proposition}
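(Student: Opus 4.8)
The plan is to reduce the graph inequality to the classical one-dimensional Gagliardo--Nirenberg estimates on $\R$ (or on $\Rp$) by a doubling/reflection argument, exactly as one does for a bounded interval. First I would recall the one-dimensional inequalities: for every $p\in[2,\iii]$ there is a constant $\Ce$, depending only on $p$, such that $\nlp{f}^p\le \Ce\nldue{f}^{\f{p}{2}+1}\nldue{f'}^{\f{p}{2}-1}$ and $\nli{f}\le \Ce^{1/2}\nldue{f}^{1/2}\nldue{f'}^{1/2}$ for every $f\in\hr$ (the second being the limiting case of the first, with the same optimal constant). These are standard and can be quoted; the point of the proposition is that an analogous statement survives on $\G$, with the $L^2$-based structure of the right-hand side preserved and only the multiplicative constant $\gn$ (but not $\Ce$) degrading because of the graph's combinatorial complexity.

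The key step is the pointwise bound $\nlig{u}\le\gn\,\nldueg{u}^{1/2}\,\nldueg{u'}^{1/2}$. I would prove this first, since the $L^p$ estimate will follow from it by interpolation. Fix $u\in\hg$ and a point $x_0$ on some edge $e$ where $\mo{u}$ is (nearly) maximal. Since $\G$ is connected and has finitely many edges, from $x_0$ one can reach any vertex at infinity along a path made of finitely many whole edges followed by a half-line; parametrising this path gives a half-line-type profile $v\in\hrpos$ with $v(0)=u(x_0)$, and the one-dimensional estimate on $\Rp$ yields $\mo{u(x_0)}^2=\mo{v(0)}^2\le 2\nlldue{v}\,\nlldue{v'}$. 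The subtlety is that $v$ is built out of pieces of $u$ possibly traversed several times (each edge of $\K$ can appear in the chosen path, and different choices of $x_0$ may force reusing edges), so $\nlldue{v}^2\le m\,\nldueg{u}^2$ and $\nlldue{v'}^2\le m\,\nldueg{u'}^2$ for some integer $m$ depending only on the graph (essentially the maximal number of edges on a simple path joining any edge to the nearest vertex at infinity, counted with the multiplicity forced by the construction). This produces $\gn$ as an explicit geometric constant of $\G$. I expect this path-selection bookkeeping --- making sure the multiplicity $m$ is finite and uniform in $u$ and in the choice of the near-maximiser $x_0$ --- to be the main obstacle, though it is entirely elementary once set up carefully; an alternative is to use the fact that $\G$ contains at least one half-line and extend $u$ restricted to a well-chosen ray to an $\hr$ function on the whole line by even reflection, bounding its norms by a graph-dependent multiple of the norms of $u$.

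Granted (\ref{eq:gni}), the inequality (\ref{eq:gnp}) for finite $p$ follows by writing
\[
\nlpg{u}^p=\sum_{e\in\E}\|u_e\|_{L^p(\I_e)}^p\le\nlig{u}^{p-2}\sum_{e\in\E}\|u_e\|_{L^2(\I_e)}^2=\nlig{u}^{p-2}\,\nldueg{u}^2,
\]
and then inserting (\ref{eq:gni}): $\nlpg{u}^p\le\big(\gn\nldueg{u}^{1/2}\nldueg{u'}^{1/2}\big)^{p-2}\nldueg{u}^2=\gn^{\,p-2}\,\nldueg{u}^{\f{p}{2}+1}\nldueg{u'}^{\f{p}{2}-1}$, which is (\ref{eq:gnp}) with $\Ce=\gn^{\,p-2}$. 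Strictly this makes $\Ce$ graph-dependent; to get a constant depending only on $p$, as claimed, I would instead argue edgewise: on each interval $\I_e$ (finite or a half-line) apply the one-dimensional Gagliardo--Nirenberg inequality with the $p$-only constant, obtaining $\|u_e\|_{L^p(\I_e)}^p\le\Ce\|u_e\|_{L^2(\I_e)}^{\f p2+1}\|u_e'\|_{L^2(\I_e)}^{\f p2-1}$, and then sum over $e$, using the discrete H\"older/superadditivity inequality $\sum_i a_i^{\theta}b_i^{1-\theta}\le(\sum_i a_i)^{\theta}(\sum_i b_i)^{1-\theta}$ with $\theta=(\f p2+1)/p\in(0,1)$ and $a_i=\|u_e\|_{L^2(\I_e)}^2$, $b_i=\|u_e'\|_{L^2(\I_e)}^2$ (so that $\f p2-1=p(1-\theta)$); this reassembles exactly the right-hand side of (\ref{eq:gnp}) with the same $p$-only $\Ce$. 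One delicate point in the edgewise argument is that the classical one-dimensional estimate on a bounded interval $[0,l_e]$ a priori carries an $l_e$-dependent constant; this is avoided by using, for bounded edges too, the version on the half-line obtained by extending $u_e$ past the endpoints (or by noting that the sharp constant for the homogeneous inequality on any interval is controlled by that on $\R$), so that the uniform $\Ce$ is legitimate. With the endpoint case (\ref{eq:gni}) already in hand and the finite-$p$ case reduced to summation, the proof is complete.
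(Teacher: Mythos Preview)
Your path-to-infinity argument for the $L^\infty$ bound (\ref{eq:gni}) is correct and yields a legitimate, if graph-dependent, constant $\gn$. The paper takes a different and cleaner route: it passes to the decreasing rearrangement $\ru$ of $|u|$ on $\Rp$, which preserves all $L^r$ norms and decreases the Dirichlet integral (P\'olya--Szeg\H{o} on graphs, quoted from \cite{AST-1}), and then simply applies the one-dimensional Gagliardo--Nirenberg inequalities on $\Rp$. This handles (\ref{eq:gnp}) and (\ref{eq:gni}) simultaneously and delivers a $\Ce$ equal to the half-line constant (hence depending only on $p$) and a universal $\gn\le\sqrt{2}$.

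The genuine gap in your proposal is the $L^p$ estimate with a constant depending \emph{only on $p$}, as the statement requires. Your interpolation route gives $\Ce=\gn^{p-2}$, which is graph-dependent and therefore does not prove what is claimed. Your edgewise alternative breaks at its first step: the homogeneous inequality $\|u_e\|_{L^p(\I_e)}^p\le \Ce\|u_e\|_{L^2(\I_e)}^{p/2+1}\|u_e'\|_{L^2(\I_e)}^{p/2-1}$ is simply \emph{false} on a bounded edge $\I_e$ (take $u_e$ constant), and neither of your proposed repairs works. Reflecting $u_e$ past an endpoint still leaves you on a bounded interval; extending along the graph towards a half-line destroys the edge-by-edge separation that your H\"older summation needs; and the assertion that ``the sharp constant for the homogeneous inequality on any interval is controlled by that on $\R$'' is false for the same reason (constant functions). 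The rearrangement argument bypasses this entirely, because $\ru$ lives on an unbounded half-line where the homogeneous one-dimensional inequality is available with a $p$-only constant.
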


\begin{proof}
 Consider a nonnegative function $u\in\hg$. We define its \cor{decreasing rearrangement} as the function $\ru:\Rp\rightarrow\R$ such that
 \[
  \ru(x):=\inf\{t\geq0:\rho(t)\leq x\},
 \]
 where
 \[
  \rho(t):=\sum_{e\in\E}\mis(\{x_e\in\I_e:u_e(x_e)>t\}),\quad t\geq0,
 \]
 is the \cor{distribution function} of $u$ (for more details, see \cite{AST-1, Friedlander}). Then, clearly,
 \begin{equation}
  \label{eq:riarrangp}
  \intd{\G}{\mo{u(x)}^r}{x}=\intd{\Rp}{\mo{\ru(x)}^r}{x}\quad\mbox{and}\quad\sup_{\G}u=\sup_{\Rp}\ru
 \end{equation}
 for every $r\geq1$. Moreover $\ru\in\hrpos$ and, by Proposition 3.1 in \cite{AST-1},
 \begin{equation}
  \label{eq:riarrangh}
  \intd{\Rp}{\mo{(\ru)'(x)}^2}{x}\leq\intd{\G}{\mo{u'(x)}^2}{x}.
 \end{equation}
 Then, from the Gagliardo-Nirenberg inequality on $\Rp$ (cfr. \cite{Agueh,Dolbeault})
 \begin{eqnarray*}
  \nlpg{u}^p & = & \nllp{\ru}^p\leq \Ce\nlldue{\ru}^{\f{p}{2}+1}\nlldue{(\ru)'}^{\f{p}{2}-1}\\
             & \leq & \Ce\nldueg{u}^{\f{p}{2}+1}\nldueg{u'}^{\f{p}{2}-1}.
 \end{eqnarray*}
 In the very same way it is possible to prove (\ref{eq:gni}).
\end{proof}

\begin{remark}
 \label{remark:gn_straight}
 It is well known that the \cor{best constant} of (\ref{eq:gni}) when $\G=\R$ is equal to 1. Indeed, assume that $u\in\hr$ satisfies $\mo{u(0)}=\max_{\R}\mo{u(x)}$ and let $\overline{u}$ be the \cor{even part} of $u$. Then
 \[
  u^2(0)=\overline{u}^2(0)=2\intl{-\iii}{0}{\overline{u}(x)\overline{u}'(x)}{x}\leq\nldue{\overline{u}}\nldue{\overline{u}'},
 \]
 so that
 \[
  \nli{u}\leq\nldue{u}^{\f{1}{2}}\nldue{u'}^{\f{1}{2}}.
 \]
 Since equality holds for $u=e^{-\mo{x}}$, the claim follows. With a slight further effort, one can also check that when $\G=\Rp$ the best constant is $\sqrt{2}$.
 
 According to these remarks, we see that for a generic graph $\G$ the best constant in (\ref{eq:gni}) satisfyies $\gn\leq\sqrt{2}$. In addition, if the graph contains at least two half-lines, then one can repeat the proof of Proposition \ref{proposition:gn} using the \cor{symmetric decreasing} rearrangement on $\R$ in place of the \cor{decreasing} one on $\Rp$ (see again \cite{AST-1, Friedlander}), thus obtaining that (in this case) $\gn\leq1$.
\end{remark}

Next we deal with the minimization problem (\ref{eq:constrainedminimi}) on a straight line with an extra Dirichlet condition.

\begin{proposition}
 \label{proposition:mindirichlet}
 Let $m>0,$ $a>0$. Then the function $\displaystyle u(x)=ae^{-\tf{a^2\mo{x}}{m}}$ satisfies
 \begin{equation}
  \label{eq:mindirichlet}
  \nldue{u'}^2=\min_{v\in\hdir}\nldue{v'}^2
 \end{equation}
 where $\hdir:=\{v\in\hrm:v(0)=a\}$.
\end{proposition}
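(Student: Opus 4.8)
The plan is to prove that $\nldue{v'}^2\ge\tf{a^4}{m}$ for \emph{every} $v\in\hdir$ and that this value is attained precisely at $u$, by an argument in the spirit of Remark~\ref{remark:gn_straight}. First I would check that $u$ is admissible: it is continuous, belongs to $\hr$, satisfies $u(0)=a$, and a direct computation gives $\nldue{u}^2=a^2\intd{\R}{e^{-2a^2\mo{x}/m}}{x}=m$, so $u\in\hdir$; moreover $u'(x)=-\tf{a^3}{m}\,\mathrm{sgn}(x)\,e^{-a^2\mo{x}/m}$ for $x\neq0$, whence $\nldue{u'}^2=\tf{a^4}{m}$.

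Next, for an arbitrary $v\in\hdir$ I would split $\R$ at the origin and use the fundamental theorem of calculus together with the fact that $H^1$ functions vanish at $\pm\iii$. Since $(v^2)'=2vv'\in L^1(\R)$ (because $v,v'\in L^2(\R)$), integrating over $(0,+\iii)$ and over $(-\iii,0)$ gives $v(0)^2=-2\intl{0}{\iii}{vv'}{x}$ and $v(0)^2=2\intl{-\iii}{0}{vv'}{x}$. Using $v(0)=a$, the Cauchy–Schwarz inequality on each half-line, and then adding the two resulting estimates yields
\[
 a^2\le\|v\|_{L^2(0,\iii)}\,\|v'\|_{L^2(0,\iii)}+\|v\|_{L^2(-\iii,0)}\,\|v'\|_{L^2(-\iii,0)}.
\]
Applying the discrete Cauchy–Schwarz inequality to the two-term sum on the right and using $\|v\|_{L^2(0,\iii)}^2+\|v\|_{L^2(-\iii,0)}^2=\nldue{v}^2=m$ bounds the right-hand side by $\sqrt{m}\,\nldue{v'}$. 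Hence $\nldue{v'}^2\ge\tf{a^4}{m}=\nldue{u'}^2$ for all $v\in\hdir$, and since $u$ itself belongs to $\hdir$ the minimum in (\ref{eq:mindirichlet}) is attained at $u$.

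I do not expect a genuine obstacle here: the existence of a minimizer is handed to us for free by the explicit formula, so the entire content is the lower bound above, and the only point that requires care is chaining the two Cauchy–Schwarz steps — one on each half-line and one on the two-term sum — so that the constants combine into exactly $\sqrt{m}$ (a factor $2$ enters on each half-line and is absorbed when the two inequalities are summed). Should one wish to identify all minimizers, one would examine the equality cases: equality in the half-line estimates forces $v'$ proportional to $v$ on each side, i.e. an exponential profile on each half-line, and the two-term step then fixes the decay rate, giving $u$ up to a sign; but uniqueness is not asserted in the statement, so I would not pursue it.
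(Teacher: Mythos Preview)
Your proof is correct and follows essentially the same route as the paper. The paper simply invokes inequality (\ref{eq:gni}) with best constant $\gn=1$ on $\R$ (established in Remark~\ref{remark:gn_straight}) to obtain $\nldue{v'}^2\ge\nli{v}^4/\nldue{v}^2\ge a^4/m$, and then checks that $u$ attains this value; you unpack this bound from first principles via the split at the origin and the two Cauchy--Schwarz steps, which is exactly the content of that remark rephrased without passing through the even part.
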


\begin{proof}
 Consider inequality (\ref{eq:gni}). From Remark \ref{remark:gn_straight} we know that, when $\G=\R$, the best constant is $\gn=1$. Hence, for a non identically zero function $v$ in $\hr$, we find that
 \[
  \nldue{v'}^2\geq\f{\nli{v}^4}{\nldue{v}^2}
 \]
 and, assuming also that $v\in\hdir$,
 \begin{equation}
  \label{eq:infdirbound}
  \nldue{v'}^2\geq\f{a^4}{m}.
 \end{equation}
 Since $u\in\hdir$ and
 \[
  \nldue{u'}^2=\f{a^4}{m},
 \]
 we see that the infimum is attained by $u$.
\end{proof}

The following is an analogous result over the half-line.

\begin{corollary}
 \label{corollary:dirichlethalf}
  Let $m>0,$ $a>0$. Then the function $u(x):=ae^{-\tf{a^2x}{2m}}$, satisfies
  \begin{equation}
   \label{eq:dirichlethalf}
   \nlldue{u'}^2=\min_{v\in\hhdir}\nlldue{v'}^2.
  \end{equation}
\end{corollary}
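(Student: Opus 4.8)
The plan is to mimic the proof of Proposition~\ref{proposition:mindirichlet}, replacing the full-line Gagliardo--Nirenberg inequality with its half-line counterpart. First I would recall from Remark~\ref{remark:gn_straight} that the best constant in the $L^\infty$ Gagliardo--Nirenberg inequality on $\Rp$ is $\sqrt2$, i.e.
\[
 \nlli{v}\leq\sqrt2\,\nlldue{v}^{\f12}\nlldue{v'}^{\f12}\qquad\forall v\in\hrpos.
\]
Applying this to a function $v\in\hhdir$, which satisfies $v(0)=a$ and $\nlldue{v}^2=m$, and using that $\nlli{v}\geq|v(0)|=a$ (the value at the endpoint is controlled by the sup norm), I obtain
\[
 a^4\leq 2\,\nlldue{v}^2\,\nlldue{v'}^2=2m\,\nlldue{v'}^2,
\]
hence $\nlldue{v'}^2\geq\f{a^4}{2m}$ for every $v\in\hhdir$.

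Next I would verify that the candidate $u(x)=ae^{-\tf{a^2x}{2m}}$ lies in $\hhdir$ and realizes this lower bound. Indeed $u(0)=a$, a direct computation gives $\nlldue{u}^2=\intl{0}{\iii}{a^2e^{-a^2x/m}}{x}=m$, so the mass constraint defining $\hrm$ (restricted to $\Rp$) holds, and $u'(x)=-\tf{a^2}{2m}\,ae^{-\tf{a^2x}{2m}}$, whence $\nlldue{u'}^2=\tf{a^4}{4m^2}\cdot m=\f{a^4}{2m}$. Since $u$ attains the lower bound valid for the whole class $\hhdir$, it is a minimizer, which is exactly \eqref{eq:dirichlethalf}.

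Alternatively, and perhaps more in the spirit of the word ``Corollary'', one can deduce the statement directly from Proposition~\ref{proposition:mindirichlet} by a reflection argument: given $v\in\hhdir$, its even extension $\widetilde v$ to $\R$ belongs to $\hdir$ with $\nldue{\widetilde v}^2=2\nlldue{v}^2=2m$ and $\nldue{\widetilde v'}^2=2\nlldue{v'}^2$, so minimizing $\nlldue{v'}^2$ over $\hhdir$ is equivalent to minimizing $\tf12\nldue{\widetilde v'}^2$ over even functions in $H^1_{2m}(\R)$ with value $a$ at $0$; Proposition~\ref{proposition:mindirichlet} (with mass $2m$) identifies the minimizer as $2m\mapsto a e^{-a^2|x|/(2m)}$, whose restriction to $\Rp$ is exactly $u$. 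I would probably present the self-contained half-line argument as the main proof and mention the reflection viewpoint as a remark.

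There is essentially no hard step here: the only point requiring a line of justification is that the value $v(0)$ of an $H^1(\Rp)$ function is bounded by its $L^\infty$ norm, which is immediate from the continuity of Sobolev functions in one dimension (the trace at the boundary point is a genuine pointwise value). Everything else is the two explicit integrals for $u$. The proof is therefore short, and the main care to be taken is simply bookkeeping the constant $\sqrt2$ (equivalently the factor $2$) coming from the half-line rather than the full line.
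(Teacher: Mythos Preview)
The paper gives no proof of this corollary, merely flagging it as ``an analogous result over the half-line''. Your strategy --- either rerunning the Gagliardo--Nirenberg argument with the half-line constant $\sqrt2$, or reducing to Proposition~\ref{proposition:mindirichlet} by even reflection --- is exactly the intended one, and both routes are sound.

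There is, however, an arithmetic slip that you should fix. Raising
\[
 \nlli{v}\leq\sqrt2\,\nlldue{v}^{1/2}\nlldue{v'}^{1/2}
\]
to the fourth power gives $a^4\leq 4\,m\,\nlldue{v'}^2$, not $2\,m\,\nlldue{v'}^2$; hence the correct lower bound is $\nlldue{v'}^2\geq\dfrac{a^4}{4m}$. Correspondingly, your own computation $\nlldue{u'}^2=\dfrac{a^4}{4m^2}\cdot m$ equals $\dfrac{a^4}{4m}$, not $\dfrac{a^4}{2m}$. The two errors happen to cancel, so your conclusion survives, but the displayed constants are wrong. The reflection argument, incidentally, gets the constant right automatically: applying Proposition~\ref{proposition:mindirichlet} with mass $2m$ yields $\nldue{\widetilde v'}^2\geq\dfrac{a^4}{2m}$, and halving gives $\nlldue{v'}^2\geq\dfrac{a^4}{4m}$.
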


Finally, we recall a standard result about the optimality conditions satisfied by any solution of (\ref{eq:constrainedminimi}) (for a proof, see \cite{AST-1}).

\begin{proposition}
 \label{proposition:eulageq}
 Let $\G$ be as in (\ref{eq:graphdef}) and let $\mu,p$ satisfy (\ref{eq:paramassum}). Suppose, in addition, that $u$ is a solution of (\ref{eq:constrainedminimi}). Then
 \begin{itemize}
  \item[(i)] there exists a real constant $\lambda$ such that, for every edge $e\in\E$
  \begin{equation}
   \label{eq:eulageq}
   u_e''+\kappa_eu_e\mo{u_e}^{p-2}=\lambda u_e\qquad\mbox{(Euler-Lagrange equation)}
  \end{equation}
  where $\kappa_e=1$ if $e\in\Ek$ and $\kappa_e=0$ if $e\in\E\backslash\Ek$;
  \item[(ii)] for every vertex $\vg\in\Vk$
  \begin{equation}
   \label{eq:kirchoff}
   \sum_{e\succ\vg}\der{u_e}{x_e}(\vg)=0\qquad\mbox{(Kirchhoff conditions)}
  \end{equation}
  where ``$e\succ\vg$'' means that the edge $e$ is incident at $\vg$;
  \item[(iii)] up to a change of sign, $u>0$ on $\G$.
 \end{itemize}
\end{proposition}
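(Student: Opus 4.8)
The plan is to follow the classical Lagrange-multiplier scheme, then run an elliptic bootstrap on each edge, and finally deduce strict positivity from a maximum-principle argument; I will only indicate the main steps.

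First I would apply the Lagrange multiplier rule. The functionals $v\mapsto E(v,\G)$ and $G(v):=\nldueg{v}^2$ are of class $C^1$ on $\hg$ — here the compactness of $\K$ and the embedding $\hk\hookrightarrow\ligc$ take care of the nonlinear term — and $G'(u)\neq0$ since $u\in\hm$ forces $u\not\equiv0$. Hence there is $\lambda\in\R$ such that
\[
 \intd{\G}{u'\phi'}{x}-\sum_{e\in\Ek}\intd{\I_e}{|u_e|^{p-2}u_e\phi_e}{x_e}=-\lambda\intd{\G}{u\phi}{x}\qquad\forall\phi\in\hg.
\]
Testing this with functions supported in the interior of a single edge $e$ shows that $u_e$ solves $-u_e''-\kappa_e|u_e|^{p-2}u_e=-\lambda u_e$ in the sense of distributions on $\I_e$; since $u_e\in H^1(\I_e)\hookrightarrow C(\I_e)$ and $p>2$, the right-hand side is continuous, so $u_e\in C^2(\I_e)$ and (\ref{eq:eulageq}) holds pointwise. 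In particular $u$ is $C^2$ on every edge and the one-sided derivatives $\der{u_e}{x_e}(\vg)$ at the vertices are well defined, which is what makes (\ref{eq:kirchoff}) meaningful.

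Next, to obtain (ii), I would fix $\vg\in\Vk$ and test the weak equation with $\phi\in\hg$ supported in a neighbourhood of $\vg$ that meets no other vertex of $\Vk$. Integrating by parts edge by edge and using the equation just derived, all the integrals over the edges cancel and one is left with $\phi(\vg)\sum_{e\succ\vg}\der{u_e}{x_e}(\vg)=0$; here the common value $\phi(\vg)$ factors out because $\phi$ is continuous at $\vg$. Since $\phi(\vg)$ is arbitrary, (\ref{eq:kirchoff}) follows (at a vertex of degree one this is just the Neumann condition). Finally, for (iii), I would use that $E(|u|,\G)\le E(u,\G)$ and $\nldueg{|u|}=\nldueg{u}$, so that $|u|$ is again a minimizer and, up to a change of sign, $u\ge0$. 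If $u$ vanished at some point $x_0\in\G$, then: if $x_0$ is interior to an edge it is a minimum of the $C^1$ function $u$, hence $u'(x_0)=0$, and uniqueness for the Cauchy problem associated with (\ref{eq:eulageq}) with data $(0,0)$ forces $u\equiv0$ on that edge; if $x_0=\vg\in\Vk$, then $u_e\ge0$ and $u_e(\vg)=0$ make each inward derivative $\der{u_e}{x_e}(\vg)$ nonnegative, so (\ref{eq:kirchoff}) forces them all to vanish and again $u\equiv0$ on each edge at $\vg$. Because $\G$ is connected this propagates to $u\equiv0$ on $\G$, contradicting $\nldueg{u}^2=\mu>0$; hence $u>0$ on $\G$.

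The steps I expect to be most delicate are the $C^1$-regularity of the functionals (needed for the multiplier rule) together with the bootstrap, and the orientation/sign bookkeeping in the integration by parts leading to the Kirchhoff conditions; the multiplier rule itself and the propagation-of-zeros argument are then routine.
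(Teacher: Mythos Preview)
Your argument is correct and follows the standard route: Lagrange multiplier rule, edge-by-edge bootstrap to obtain the classical Euler--Lagrange equation, integration by parts at each vertex to extract the Kirchhoff condition, and a Cauchy-uniqueness/propagation-of-zeros argument for strict positivity. The paper itself does not prove this proposition at all; it states it as a recalled fact and refers the reader to \cite{AST-1} for a proof, so there is no in-paper argument to compare against. Your outline is essentially what one expects to find in that reference.
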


\begin{remark}
 \label{remark:kirchoff}
 The symbol $\der{u_e}{x_e}(\vg)$ is a shorthand notation for $u_e'(0)$ or $-u_e'(l_e)$, according to the fact that $x_e$ is equal to $0$ or $l_e$ at $\vg$.
\end{remark}

\section{Proof of the main results}\label{sec:proofs}

Throughout this section, it is convenient to identify each $u\in\hg$ with a $(\nn+1)$-ple of functions $\pp,\ff{1},\dots,\ff{\nn}$ such that
\begin{equation}
 \label{eq:reg_cond}
 \pp\in\hk,\quad\ff{i}\in\hrv{i}\quad\mbox{for}\quad i=1,\dots,\nn,
\end{equation}
and
\begin{equation}
 \label{eq:cont_cond}
 \pp(\w)=\ff{i}(\w)\quad\mbox{for}\quad i=1,\dots,\nn,
\end{equation}
where $\w$ denotes the starting vertex of $\Rv{i}$. Then (\ref{eq:constrainedminimi}) is equivalent to the minimization problem
\[
 \min\left(\frac{1}{2}\nlduegc{\pp'}^2-\frac{1}{p}\nlpgc{\pp}^p+\frac{1}{2}\sum_{i=1}^{\nn}\nldues{\ff{i}'}^2\right),
\]
with $(\pp,\ff{1},\dots,\ff{\nn})$ varying in the set of the $(\nn+1)$-ples satisfying (\ref{eq:reg_cond})-(\ref{eq:cont_cond}) and subject to the mass constraint
\begin{equation}
 \label{eq:mass_new}
 \nlduegc{\pp}^2+\sum_{i=1}^{\nn}\nldues{\ff{i}}^2=\mu.
\end{equation}

\medskip
\begin{proof}[Proof of Theorem~\ref{theorem:infsotto}]
 We prove the two statements separately.
 
 \medskip
 \noindent\cor{Part (i): proof of (\ref{eq:infzero}).} Consider a function $\xi\in C_0^{\iii}(\R^+)$ with $\nlldue{\xi}^2=\mu$. Then, for every $\lambda>0$ we define a function
 \[
  u_{\lambda}(x)=
  \left\{
  \begin{array}{ll}
   \sqrt{\lambda}\xi(\lambda x) & \mbox{if }x\in\Rv{1}\\
   0                            & \mbox{elsewhere.}  
  \end{array}
  \right.
 \]
 As a consequence, $u_{\lambda}\in\hm$ and $E(u_{\lambda},\G)=\f{\lambda^2}{2}\nlldue{\xi'}^2$. Hence
 \[
  \inf_{u\in\hm}E(u,\G)\leq\lim_{\lambda\rightarrow0}E(u_{\lambda},\G)=0
 \]
 and (\ref{eq:infzero}) is proved.
 
 \medskip
 \noindent \cor{Part (ii): (\ref{eq:infsotto}) entails existence.} Note that from (\ref{eq:gnp}) 
 \begin{equation}
  \label{eq:gnapplication}
  E(u,\G)\geq\f{1}{2}\nldueg{u'}^2-\f{\Ce\mu^{\f{p+2}{4}}}{p}\nldueg{u'}^{\tf{p}{2}-1}\quad\forall u\in\hm.
 \end{equation}
 Then, since $p/2-1<2$ by (\ref{eq:paramassum}), there results
 \begin{equation}
  \label{eq:funccoerc}
  \nhg{u}^2\leq C+CE(u,\G)
 \end{equation}
 (here $C$ depends on $\mu$). Consider now a minimizing sequence $u_k$ for problem (\ref{eq:constrainedminimi}). By (\ref{eq:funccoerc}) it is bounded in $\hg$ and therefore (up to subsequences)
 \[
  u_k\rightharpoonup u\quad\mbox{in}\quad\hg.
 \]
 Recalling that $u_k=(\pp_k,{\ff{1}}_k,\dots,{\ff{\nn}}_k)$ with $\pp_k,{\ff{1}}_k,\dots,{\ff{\nn}}_k$ satisfying (\ref{eq:reg_cond})-(\ref{eq:mass_new}), we see that there exist $\pp\in\hk$ and $\ff{i}\in\hrv{i}$, satisfying (\ref{eq:cont_cond}), such that
 \[
  \pp_k\rightharpoonup\pp\quad\mbox{in }\hk,\qquad{\ff{i}}_k\rightharpoonup\ff{i}\quad\mbox{in }\hrv{i}\quad\mbox{for } i=1,\dots,\nn.
 \]
 Moreover, since $\K$ is compact, we also have
 \[
  \pp_k\rightarrow \pp\quad\mbox{in}\quad\lpgc.
 \]
 Setting $u=(\pp,\ff{1},\dots,\ff{\nn})$, by weak lower semicontinuity,
 \begin{equation}
  \label{eq:wlsc}
  E(u,\G)\leq\liminf_k E(u_k,\G)=\inf_{v\in\hm}E(v,\G)<0
 \end{equation}
 and
 \[
  \nldueg{u}^2\leq\mu.
 \]
 If we prove, in addition, that the latter inequality is in fact an equality, then the function $u$ is a minimizer. Note that, by (\ref{eq:wlsc}), $u\not\equiv0$. Suppose now that $0<\nldueg{u}^2<\mu$. Hence there exists $\sigma>1$ such that $\nldueg{\sigma u}^2=\mu$. Consequently
 \begin{eqnarray*}
  E(\sigma u,\G) & = & \f{\sigma^2}{2}\intd{\G}{\mo{u'(x)}^2}{x}-\f{\sigma^p}{p}\intd{\K}{\mo{u(x)}^p}{x}\\
                 & < & \sigma^2E(u,\G)<E(u,\G)\leq\inf_{v\in\hm}E(v,\G),
 \end{eqnarray*}
 contradicting (\ref{eq:wlsc}). Then $\nldueg{u}^2=\mu$ and this proves (\ref{eq:infsotto}). Finally, $u>0$ immediately follows from Proposition (\ref{proposition:eulageq}).
\end{proof}

\medskip
\begin{proof}[Proof of Theorem~\ref{theorem:pdue}]
 Let $\lung=\mis(\K)$ and consider the function $u=(\pp,\ff{1},\dots,\ff{\nn})$ defined by
 \begin{equation}
  \label{eq:competitor}
  \pp\equiv a,\qquad\ff{i}(x)=ae^{-\tf{a^2x}{2m}}\quad\mbox{for }i=1,\dots,\nn,
 \end{equation}
 with
 \begin{equation}
  \label{eq:vinc_comp}
  a\in\left(0,\sqrt{\mu/\lung}\right)\quad\mbox{and}\quad m=\f{\mu-a^2\lung}{\nn}.
 \end{equation}
 Then (\ref{eq:reg_cond})-(\ref{eq:mass_new}) are satisfied and the energy functional reads
 \begin{equation}
  \label{eq:energy_constant}
  E(u,\G)=\frac{a^4\nn^2}{8(\mu-a^2\lung)}-\f{a^p\lung}{p}.
 \end{equation}
 Now, since $p\in(2,4)$, when $a$ is sufficiently small we see that $E(u,\G)<0$ and hence existence of minimizers for (\ref{eq:constrainedminimi}) follows from Theorem \ref{theorem:infsotto}.
\end{proof}

\medskip
\begin{proof}[Proof of Theorem~\ref{theorem:pquattro}]
 We break the proof in two parts.
 
 \medskip
 \noindent \cor{Part (i): proof of (\ref{eq:esistenza}).} Let $\lung=\mis(\K)$ and let $u$ be the function defined in (\ref{eq:competitor})-(\ref{eq:vinc_comp}), so that the energy functional reads as in (\ref{eq:energy_constant}). When $p=4$ one can easily check that, if
 \[
  \lung>\f{\nn^2}{2\mu},
 \]
 then there exists $a_0\in\left(0,\sqrt{\mu/\lung}\right)$ such that $E(u,\G)<0$. On the other hand, when $p\in(4,6)$ we claim that, if
 \[
  \lung>\Cp\f{\nn^{\f{4}{6-p}}}{\mu^{\f{p-2}{6-p}}}
 \]
 with
 \begin{equation}
  \label{eq:const_ex}
  \Cp=\left[\left(\f{p(p-4)}{16}\right)^{\f{2}{p-2}}+\f{p}{8}\left(\f{p(p-4)}{16}\right)^{\f{4-p}{p-2}}\right]^{\f{p-2}{6-p}},
 \end{equation}
 then there exists again $a_0\in\left(0,\sqrt{\mu/\lung}\right)$ such that $E(u,\G)<0$. Indeed, $E(u,\G)<0$ is equivalent to $g(a)<\mu$ where
 \[
  g(a)=a^2\lung+\f{\nn^2p}{8\lung}a^{4-p}.
 \]
 Easy computations show that $g$ has a unique critical point given by
 \[
  \overline{a}=\left(\f{\nn^2p(p-4)}{16\lung^2}\right)^{\f{1}{p-2}}.
 \]
 As $g$ is strictly convex, this is a global minimizer for $g$ in $(0,\sqrt{\mu/\lung})$ and, in addition,
 \[
  g(\overline{a})=\lung^{-\f{6-p}{p-2}}\left[\left(\f{\nn^2p(p-4)}{16}\right)^{\f{2}{p-2}}+\f{\nn^2p}{8}\left(\f{\nn^2p(p-4)}{16}\right)^{\f{4-p}{p-2}}\right].
 \]
 Now, if
 \[
  \overline{a}<\sqrt{\mu/\lung}\qquad\mbox{and}\qquad g(\overline{a})<\mu,
 \]
 then the claim is proved setting $a_0=\overline{a}$. The former inequality explicitly reads
 \begin{equation}
  \label{eq:a_min}
  \mu^{\f{p-2}{6-p}}\lung>\left(\f{\nn^2p(p-4)}{16}\right)^{\f{2}{6-p}},
 \end{equation}
 whereas the latter reads
 \begin{align}
  \label{eq:val_a_min}
  \mu^{\f{p-2}{6-p}}\lung & >\,\left[\left(\f{\nn^2p(p-4)}{16}\right)^{\f{2}{p-2}}+\f{\nn^2p}{8}\left(\f{\nn^2p(p-4)}{16}\right)^{\f{4-p}{p-2}}\right]^{\f{p-2}{6-p}}\nonumber\\
  & =\,N^{\f{4}{6-p}}\Cp.
 \end{align}
 Now, observing that (\ref{eq:val_a_min}) entails (\ref{eq:a_min}), letting
 \begin{equation}
  \label{eq:const_exist}
  \thes=\left\{
  \begin{array}{ll}
   \displaystyle\tf{1}{2}\mu^{-1}\nn^2                & \mbox{if }p=4\\[.3cm]
   \displaystyle\Cp\mu^{\f{2-p}{6-p}}\nn^{\f{4}{6-p}} & \mbox{if }p\in(4,6)
  \end{array}
  \right.
 \end{equation}
 we see that (\ref{eq:esistenza}) follows from Theorem \ref{theorem:infsotto}.
 
 \medskip
 \noindent \cor{Part (ii): proof of (\ref{eq:nonesistenza}).} Suppose that there exists a function $u\in\hm$ such that
 \begin{equation}
  \label{eq:assurdipotesi}
  E(u,\G)\leq0.
 \end{equation}
 We claim that
 \begin{equation}
  \label{eq:inductiveineq}
  \nldueg{u'}^2\leq\f{1}{\gn^4\mu}\nlig{u}^{4\left(\tf{p}{4}\right)^{n+1}}(\gn^4\mu\lung)^{\sum_{i=0}^n\left(\tf{p}{4}\right)^i}\quad\forall n\geq0
 \end{equation}
 (once again $\lung=\mis(\K)$). To check this by induction, we first note that from (\ref{eq:assurdipotesi}) we have
 \begin{equation}
  \label{eq:inducbase}
  \nldueg{u'}^2\leq\f{2}{p}\nlpgc{u}^p\leq\lung\nlig{u}^p,
 \end{equation}
 since $2/p<1$, which corresponds to (\ref{eq:inductiveineq}) when $n=0$. Moreover, for fixed $n>0$, assume that
 \begin{equation}
  \label{eq:inducass}
  \nldueg{u'}^2\leq\f{1}{\gn^4\mu}\nlig{u}^{4\left(\tf{p}{4}\right)^n}(\gn^4\mu\lung)^{\sum_{i=0}^{n-1}\left(\tf{p}{4}\right)^i}.
 \end{equation}
 Combining with (\ref{eq:gni}), we see that
 \begin{equation}
  \label{eq:inducstep}
  \nlig{u}^p\leq\nlig{u}^{4\left(\tf{p}{4}\right)^{n+1}}(\gn^4\mu\lung)^{\sum_{i=1}^n\left(\tf{p}{4}\right)^i}
 \end{equation}
 and plugging into (\ref{eq:inducbase}) we recover (\ref{eq:inductiveineq}). Furthermore, combining (\ref{eq:assurdipotesi}) with (\ref{eq:gnapplication}) and (\ref{eq:gni}), we find that
 \[
  \nlig{u}\leq\gn\Ce^{\f{1}{6-p}}\mu^{\f{2}{6-p}}
 \]
 and thus, putting into inequality (\ref{eq:inductiveineq}), we obtain
 \[
  \begin{array}{c}
   \nldueg{u'}^2\leq\Ce^2\mu^3(\gn^4\mu\lung)^{n+1},\qquad\mbox{if }p=4,\\[.5cm]
   \nldueg{u'}^2\leq\Ce^{\f{4}{6-p}}\mu^{\f{p+2}{6-p}}\left(\gn^{\f{4p}{p-4}}\Ce^{\f{4}{6-p}}\mu^{\f{4(p-2)}{(p-4)(6-p)}}\lung^{\f{4}{p-4}}\right)^{\left(\f{p}{4}\right)^{n+1}-1},\quad\mbox{if }p\in(4,6),
  \end{array}
 \]
 for every $n\geq0$. Now, when the terms in brackets in the two inequalities are strictly smaller than 1, letting $n\rightarrow\iii$ we obtain that $\nldueg{u'}^2=0$, which is a contradiction since $\nldueg{u}^2=\mu>0$. Consequently, if we choose
 \[
  \thnon=\Ce^{\f{4-p}{6-p}}\mu^{\f{2-p}{6-p}}\gn^{-p},
 \]
 then we see that $E(u,\G)>0$ whenever $\lung<\thnon$ and hence (\ref{eq:nonesistenza}) follows from Theorem \ref{theorem:infsotto}.
\end{proof}

\medskip
\begin{proof}[Proof of Corollary~\ref{corollary:nonesistenza}]
 Consider a function $u$ in $\hm$. According to the partition $(\G_i)_{i=1}^{\pa}$, the energy functional reads
 \begin{equation}
  \label{eq:energy_dec}
  E(u,\G)=\sum_{i=1}^{\pa}E(u_{|_{\G_i}},\G_i).
 \end{equation}
 Now if $u_{|_{\G_i}}\equiv0$ for some $i$, then $E(u_{|_{\G_i}},\G_i)=0$. On the other hand, if $u_{|_{\G_i}}\not\equiv0$, since
 \[
  \|{u_{|_{\G_i}}}\|_{L^2(\G_i)}^2\leq\mu,
 \]
 then there exists $\sigma\geq1$ satisfying
 \[
  \intd{\G_i}{\sigma^2\mo{u(x)}^2}{x}=\mu,
 \]
 so that
 \[
  E(\sigma u_{|_{\G_i}},\G_i)\leq\sigma^2E(u_{|_{\G_i}},\G_i).
 \]
 Therefore, since $\mis(\K_i)<\thnon$, (\ref{eq:nonesistenza}) entails that $E(\sigma\left.u\right|_{\G_i},\G_i)>0$, whence $E(\left.u\right|_{\G_i},\G_i)>0$. Now, as $u\in\hm$, then $u_{|_{\G_i}}\not\equiv0$ for at least one $i$. As a consequence, $E(u,\G)>0$, which concludes the proof.
 
 \noindent Note that we omit the case of a function $u\in\hm$ with support contained in a subgraph of the partition having trivial compact part, since this immediately entails that $E(u,\G)>0$.
\end{proof}

\bigskip
\noindent\gr{Acknowledgements}

\medskip
\noindent The author is grateful to Enrico Serra for many helpful suggestions during the preparation of this work and to Paolo Tilli and Riccardo Adami for several enlightening discussions.

\end{document}